\documentclass[11pt,a4paper,oneside]{amsart}
\usepackage[T1]{fontenc}

\usepackage{tikz}
\usepackage{latexsym,amsfonts,amsmath,amsthm,amssymb,mathtools,mathrsfs}

\usepackage{graphicx}
\usepackage{comment}
\usepackage{placeins}
\usepackage[dvipsnames]{xcolor}
\usepackage{soul}
\usepackage{subcaption}
\usepackage{pgfplots}

\usepackage{xurl}
\usepackage{microtype}

\usepackage[
    colorlinks=true,
    breaklinks=true,
    linkcolor=red,
    citecolor=ForestGreen,
    urlcolor=blue
]{hyperref}

\newcommand{\abs}[1]{\lvert#1\rvert}

\pgfplotsset{compat=newest}

\theoremstyle{plain}
\newtheorem{theorem}{Theorem}[section]
\newtheorem{lemma}[theorem]{Lemma}
\newtheorem{remark}[theorem]{Remark}

\newtheorem{prop}[theorem]{Proposition}

\title{
Effective Dynamics of Disclination Pairs
}

\author{Nicolò Briatico $^\dagger$}
\author{Pierluigi Cesana$^\ddagger$}
\author{Marco Morandotti $^\dagger$}
\thanks{$\dagger$ Dipartimento di Scienze Matematiche ``G.~L.~Lagrange'', Politecnico di Torino, Corso Duca degli Abruzzi, 24, 10129 Torino, Italy (\url{nicolo.briatico@polito.it}, \url{marco.morandotti@polito.it}).}
\thanks{$\ddagger$ Institute of Mathematics for Industry, Kyushu University, 744 Motooka, Fukuoka, 819-0395, Japan (\url{cesana@math.kyushu-u.ac.jp}).}

\date{\today}

\subjclass[2020]{
37N15 %(2000–now)Dynamical systems in solid mechanics, 74H40(2000–now)Long-time behavior of solutions for dynamical problems in solid mechanics, 
(70G75, %(2000–now)Variational methods for problems in mechanics, 
74B05, %(2000–now)Classical linear elasticity. 
74H55) %   Stability of dynamical problems in solid mechanics
}

\keywords{Wedge disclinations, disclination dynamics, material defects, dynamical Eshelby equivalence, edge dislocations}

\begin{document}

\begin{abstract}
The dissipative dynamics of a pair of wedge disclinations with opposite Frank angles confined to a circular domain is investigated under the assumption of radially symmetric motion. Two distinct dynamical regimes are identified: a diverging-disclination regime and an annihilating-dipole regime. The stationary points associated with both regimes are determined and fully characterized in terms of their stability, and quantitative estimates for the characteristic evolution times in their vicinity are derived. In the annihilating-dipole regime, it is shown that, after a suitable  time rescaling, the resulting  law of motion for the dipole coincides with that of an edge dislocation.
\end{abstract}

\maketitle

\tableofcontents

\section{Introduction}

Dislocations and disclinations (both referred to as defects in this paper, with some abuse of terminology) were introduced by Vito Volterra as prototypical singularities arising from the violation of geometric compatibility in elastic bodies,  in planar elasticity \cite{V07}. In continuum elasticity, disclinations correspond to violations of rotational symmetry and are characterized by their Frank angle, measuring the angular mismatch, and store large, although finite, elastic energies on bounded domains. Their associated stresses and strains are singular, yet square-integrable. Dislocations, on the other hand, are genuine topological singularities carrying a topological charge, and are characterized by the nano-scale Burgers vector. In the classical continuum description, they possess singular energies due to non-integrable stresses and strains.
From a mathematical viewpoint, dislocations share important similarities with Ginzburg--Landau vortices and with point charges in two-dimensional Coulomb gases, since all involve long-range logarithmic interactions between topological or effective charges \cite{alastuey1981classical,SandierSerfaty152D}.

Because of their distinct energetic and topological nature, the modeling of systems containing both dislocations and disclinations remains challenging.  Indeed, dislocations have infinite elastic energy in the continuum limit, while disclinations have finite but size-dependent energy, which complicates the superposition of their respective fields.
Nevertheless, Volterra’s construction shows that both dislocations and disclinations arise naturally as equilibrium configurations in planar linear elasticity under incompatible kinematics. Thus, they represent fundamental defect structures, both theoretically predicted and experimentally observed \cite{nn102598m, CPL14, ET67,I19,   Inamura01022013,INAMURA2017351,PL13,TE68}.

A key structural link between these defects was identified by Eshelby, who observed that the elastic field of an edge dislocation can be represented, at the level of kinematics, as that of a dipole of wedge disclinations, provided a precise relation holds between Burgers vector, Frank angle, and dipole geometry \cite{Eshelby66}. This observation reveals that dislocations can be viewed as a singular limiting configuration of disclination dipoles. Building on this idea, a variational framework coupling dislocations and disclinations was developed in \cite{CDLM24} within linearized elasticity, formulated in terms of the Airy stress function. In particular, it was shown that an edge dislocation can be obtained as the variational limit of a shrinking dipole of wedge disclinations, both at the level of minimizers and minimal energies in static regimes.

When investigating the effects that the presence of these defects has on the material properties, it is fundamental to study their motion, as it has been proved that dislocation motion is linked to plasticity and hardening phenomena \cite{Orowan1934,Polanyi1934, Taylor1934}.
 While dislocation dynamics has been extensively studied in both inertial and dissipative settings (see, \emph{e.g.}, \cite{BITZEK200411, 10.1007/978-1-4020-2111-4_5, SD2011, Tang2018}), 
comparatively little is known about the dynamics of disclinations. 
Recent work \cite{CGMP2025} introduced and analyzed a dissipative model for finite systems of wedge disclinations, obtaining existence results together with a description of collision times and dipole interactions. However, several fundamental questions remain open, including the analysis of disclination dipoles in their natural scaling regime, the connection with dislocation dynamics, and the renormalization of the associated time-dependent models. 
For related approaches  and models, see \cite{ACHARYA2001761, ACHARYA2010766, 10.1007/978-3-319-18573-6_5, GarroniVanMeursPeletierScardia19, VanMeursMorandotti19, vMPP2022, ZA2018, ZHANG2015145, BCH15, CH20}.

In this work, we study dissipative dynamics of a pair of wedge disclinations with opposite Frank angles. Such configurations naturally screen each other's elastic fields and exhibit two distinct behaviors: when the defects are sufficiently close, they attract and may annihilate, whereas for larger separations they drift toward the boundary and are eventually expelled. Previous work \cite{CGMP2025} considered mainly symmetric configurations in circular domains. Here we extend the analysis to asymmetric settings and investigate the fine structure of the dynamics.

After introducing the variational model based on the Airy stress function (Section~\ref{sec_mechanical}) and a dissipative gradient-flow structure (Section~\ref{sec_dissipative}), we study the evolution of a pair of disclinations under radial motion from general initial configurations (Section~\ref{analysis_dynamics}). We identify two regimes depending on the initial separation: a diverging regime and a converging regime.
In the diverging regime, the defects migrate toward the boundary and behave as isolated disclinations (Section~\ref{section diverging}).
 In the converging regime (Section~\ref{section converging}), corresponding to a disclination dipole, the dynamics exhibits a strong separation of time scales: the midpoint evolves slowly, while the inter-defect distance evolves rapidly.
 Exploiting this behavior, we derive a reduced effective dynamics, and show that it coincides, in the sense of Eshelby’s dipole construction, with the dynamics of an edge dislocation located at the center of the dipole (Section~\ref{sec_rescaling}).
  Finally, we complement the analytical results with numerical simulations and phase diagrams computed in MATLAB (Section~\ref{phase diagram}).

\section{Mechanical model}\label{sec_mechanical}
Let $B_R=B_R(0)\subset\mathbb{R}^2$ be the open disk of radius $R>0$ centered at the origin; we interpret it as the cross-section of an infinite cylinder containing line defects parallel to its axis.
Introducing the Young's modulus $E>0$, the Poisson's ratio $\nu\in(-1,\frac{1}{2})$, and the Airy stress function $v\in H^2(B_R)$, the linearized elastic energy is defined as
\begin{equation*}\label{eq:G_v}
\mathcal{G}(v;B_R)
\coloneqq \frac{1}{2}\frac{1+\nu}{E}\int_{B_R}\big(|\nabla^2 v|^2 - \nu\,(\Delta v)^2\big)\,\mathrm{d}x.
\end{equation*}
The two isolated disclinations in $B_R$ are modeled through a superposition of Dirac masses supported at the defects locations $y^1,y^2\in B_R$; more precisely, we introduce the \textit{disclination measure}
\begin{equation*}\label{disclination measure}
    \theta\coloneqq s_1 \, \delta_{y^{1}} + s_2 \, \delta_{y^{2}},
\end{equation*}
where the $s_k$'s are the associated Frank angles.
We combine the elastic energy~$\mathcal{G}$ and the measure~$\theta$ into the functional introduced in \cite[Formula (2.1)]{CDLM24}
\begin{equation*}\label{eq:I_theta}
\mathcal{I}^{\theta}(v;B_R) \coloneqq 
\frac{1}{2}\,\frac{1+\nu}{E}\,\int_{B_R}\big(|\nabla^2 v|^2 - \nu\,(\Delta v)^2\big)\,\mathrm{d}x + s_1 v(y^{1})+s_2 v(y^{2}),
\end{equation*}
where the last two summands are the explicit expression of the duality $\langle \theta, v\rangle$.
The corresponding Euler--Lagrange equation under traction-free boundary conditions reads
\begin{eqnarray}\label{2511111717} 
 \begin{cases}
\displaystyle
\displaystyle
\frac{1-\nu^2}{E}\,\Delta^2 \, v = -\theta & \text{in }  B_R, \\[2mm]
 \nabla^2\, v\, t = 0 & \text{on } \partial B_R,
\end{cases}
   \end{eqnarray}
and it has been shown in \cite[Proposition 1.10]{CDLM24} that
\begin{equation*}\label{equivalenza CB}
    \nabla^2\, v\, t = 0 \quad \text{on $\partial B_R$}
    \quad\text{if and only if}\quad 
    v = a, \quad \partial_n\,v = \partial_n\, a\quad \text{on $\partial B_R$,}
\end{equation*}
for some affine function $a\colon\mathbb{R}^2\to\mathbb{R}$. 
Since affine functions are in the kernel of the operators appearing in \eqref{2511111717}, without loss of generality we can choose $a=0$, thus reducing the traction-free boundary conditions to homogeneous boundary condition. The differential problem in \eqref{2511111717} now reads
\begin{eqnarray}\label{eq:biharmonic_theta} 
 \begin{cases}
\displaystyle
\frac{1-\nu^2}{E}\,\Delta^2\, v = -\theta & \text{in $B_R$\,,} \\[2mm]
\displaystyle v = \partial_n \,v = 0 & \text{on $\partial B_R$\,,}
\end{cases}
\end{eqnarray}
which is the Euler--Lagrange equation associated with the minimization problem
\begin{equation}\label{prob_min_v}
\min\bigl\{ \mathcal{I}^\theta(v;B_R) : v\in H^2_0(B_R)\bigr\}.
\end{equation}
In the same spirit as \cite{CGMP2025}, letting $\bar{v}$ be the unique solution to \eqref{prob_min_v},
by Clapeyron's theorem, the minimal elastic energy can be expressed by
\begin{equation}\label{Clapeyron generica}
\displaystyle\mathcal{G}(\bar{v}; B_R) = -\frac{1}{2}\langle\theta,\bar{v}\rangle=-\frac12\bigl(s_1 \bar v(y^1)+s_2\bar{v}(y^{2})\bigr) \eqqcolon \mathcal{W}(y^1,y^2;B_R).
\end{equation}
The solution $\bar{v}$ corresponds to that of the \textit{clamped disk problem} \eqref{eq:biharmonic_theta} and it can be expressed via the associated Green's function, see, \emph{e.g.}, \cite{Nakai1978}. 
In particular, the solution reads 
 \begin{equation*}\label{def di v}\displaystyle
\bar{v}(x) = \sum_{k=1}^2 \bar{v}^k(x), 
\end{equation*}
where the functions $\bar{v}^k\colon B_R\to\mathbb{R}$ are defined by
\begin{align*}
\displaystyle
    \bar{v}^k(x) &= -CR^2s_k \, \biggl[\frac{|x - y^k|\,^2}{R^2}\, \log\biggl( \frac{|x - y^k|\,^2}{R^2} \biggr)
    + \biggl(1 - \frac{|x|^2}{R^2}\biggr) \biggl(1 - \frac{|y^k|\,^2}{R^2} \biggr) \notag\\
    &\qquad \qquad \qquad-  \frac{|x - y^k|\,^2}{R^2} \log\biggl( \frac{R^4 - 2 R^2 x \cdot y^k + |x|^2 |y^k|\,^2}{R^4} \biggr)\biggr],
\end{align*}
if $x\neq y^k$, and are extended by continuity to $x=y^k$, with    $\bar{v}^k(y^k) = -CR^2s_k \Bigl (1 - \frac{|y^k|\,^2}{R^2} \Bigl)^2$,
where we introduce the constant
\begin{equation*}\label{costante C}
    \displaystyle
C \coloneqq \frac{E}{1 - \nu^2}\frac{1}{16\pi}
\end{equation*}
collecting the mechanical parameters.
The function $\mathcal{W}(\cdot,\cdot;B_R)\colon B_R \times B_R\to\mathbb{R}$ provides the value of the minimal elastic energy at the equilibrium as a function of the positions of the disclinations, and is particularly suited to study their dissipative dynamics.

\section{Dissipative dynamics}\label{sec_dissipative}

In this section, we derive the equations of motion for a pair of disclinations with opposite Frank angle 
\begin{equation}\label{eq_Frank_angles}
s_1=-s_2\eqqcolon s>0.
\end{equation}
In this context, the total Frank angle vanishes in $B_R$\,, but if the two disclinations get arbitrarily close, then, upon suitably rescaling by the dipole length, the resulting system is kinematically and energetically equivalent to an edge dislocation (see, \textit{e.g.} \cite{CDLM24, Eshelby66}). In particular, the Burgers vector of the resulting edge dislocation has magnitude $s$ and points in the direction which is the $\pi/2$-rotation of that from $y^2$ to $y^1$, see \cite[page~88]{CDLM24}.

With our choice of Frank angles~\eqref{eq_Frank_angles}, the explicit expression of the minimal elastic energy~\eqref{Clapeyron generica} is 
\begin{equation}\label{energia_elastica_dipolo}
\begin{split}
\mathcal{W}(y^1,y^2;B_R)=&\frac{1}{2}CR^2s^2\biggl[  \biggl(1-\frac{\abs{y^1}^2}{R^2}\biggr)^2+\biggl(1-\frac{\abs{y^2}^2}{R^2}\biggr)^2 \\
& -2\biggl(1-\frac{\abs{y^1}^2}{R^2}\biggr)\biggl(1-\frac{\abs{y^2}^2}{R^2}\biggr) \\
& +2\frac{\abs{y^2-y^1}^2}{R^2}\log\biggl(\frac{R^4-2R^2y^1\cdot y^2+\abs{y^1}^2\abs{y^2}^2}{R^2\abs{y^2-y^1}^2}\biggr)\biggr],
\end{split}
\end{equation}
and, under the assumption that disclinations evolve according to the \textit{maximal dissipation criterion} \cite{CermelliGurtin99,CermelliLeoni06,CGMP2025, HudsonMorandotti2017}, the velocity of each defect is proportional to the steepest descent of the minimal elastic energy. 
This leads to the following system of ODEs:
\begin{equation}
    \label{sistema base}
    \begin{cases}
        \dot{y}^k(t) = -\lambda_k \nabla_{y^k} \,\mathcal{W}(y^1,y^2;B_R), \qquad \text{for $t>0$,}\\
        y^k(0) = y^{k,0}\in B_R\,,
    \end{cases}
\end{equation}
for $k=1,2$.
The parameters $\lambda_k$ appearing above are a mobility parameters with units $[\lambda_k]=[\mathrm{s\,kg^{-1}}]$.
Various mobility functions are known for screw dislocations, see, \emph{e.g.}, \cite{BlassFonsecaLeoniMorandotti15, BonaschiVanMeursMorandotti17, CannoneElHajjMonneauRibaud10,CermelliGurtin99}; here, we focus only on the isotropic case and therefore we consider $\lambda_1=\lambda_2= \lambda\in\mathbb{R}$.

By observing that the energy $\mathcal{W}$ in \eqref{energia_elastica_dipolo} rescales quadratically in $R$ by passing from $B_1$ to $B_R$\,, namely 
\begin{equation*}
\mathcal{W}(Ry^1,Ry^2;B_R)=R^2\mathcal{W}(y^1,y^2;B_1), \qquad\text{for $y^1,y^2\in B_1$\,,}
\end{equation*}
following \cite[Section 3.1]{CGMP2025}, it is convenient to recast \eqref{sistema base} in non-dimensional form. 
By means of the rescaling
\begin{equation}\label{rescaling}
\frac{y^k}{R}\mapsto y^k 
\quad\text{and}\quad 
C\lambda t\mapsto t, 
\end{equation} 
and by defining $W(\cdot,\cdot)\colon B_1\times B_1\to\mathbb{R}$ as
$$W(y^1,y^2)\coloneqq \frac{1}{CR^2}\mathcal{W}(Ry^1,Ry^2;B_R), \qquad\text{for $y^1,y^2\in B_1$,}$$
system \eqref{sistema base}
becomes, for $k=1,2$,
\begin{equation}
    \label{sistema riscalato}
    \begin{cases}
        \dot{y}^k(t) = -\nabla_{y^k} W(y^1,y^2), \qquad \text{for $t>0$,}\\
        y^k(0) = y^{k,0}\in B_1\,.
    \end{cases}
\end{equation}

We will focus on an arrangement of disclinations $y^1=(y^1_1,y^1_2),y^2=(y^2_1,y^2_2)\in B_1$ located on a diameter, which they never abandon by symmetry reasons.
Therefore, if $y^{1,0}_2=y^{2,0}_2=0$ and $y^{1,0}_1>y^{2,0}_1$, then it is not restrictive to consider that $y^1_2(t)=y^2_2(t)=0$ and $y^1_1(t)>y^2_1(t)$ for every $t>0$.
The ordering $y^1_1>y^2_1$ is maintained throughout the evolution since collision of disclinations occurs in infinite time (see, \emph{e.g.}, \cite[Section~3.3.1]{CGMP2025}, where the study is conducted for the even more symmetric case $y^{1,0}_1=-y^{2,0}_1$). 
Owing to these considerations, it is convenient to introduce the new variables 
\begin{equation*}
    \begin{split}
    h(t) =&\, h\bigl(y^1(t),y^2(t)\bigr)
    \coloneqq \abs{y^1(t)-y^2(t)}=y_1^1(t)-y_1^2(t), \\
    d(t)= &\, d\bigl(y^1(t),y^2(t)\bigr)
    \coloneqq \frac12\bigl(y_1^1(t)+y_1^2(t)\bigr),
    \end{split}
\end{equation*}
which describe the disclination separation (dipole length, if it is very small) and the midpoint of the pair (center of the dipole, if $h$ is very small), respectively.
The relationship
\begin{equation}\label{cambiamento variabili}
y^k(t)=d(t)+\frac{(-1)^{k+1}}{2} h(t),\qquad\text{for $k=1,2$,}
\end{equation}
returns the positions of the disclinations starting from the center and the inter-disclination distance.

The dynamics is defined for $y^1_1\in(-1,1)$ and $y^2_1\in(-1,y^1_1)$ and it corresponds in a one-to-one fashion, to $d\in(-1,1)$ and $h\in(0,2)$ with the constraint that $d\pm\frac12 h\in(-1,1)$.
Should $d$ be negative, we could switch signs to $y^1_1$, $y^2_1$ and $s$, and bring ourselves back to the situation where $d\geq0$. 
Therefore, the dynamics for the pair $(h,d)$ is considered in the set 
\[
\mathcal{R} \coloneqq \Bigl\{(h,d) \in (0,2) \times [0,1) \colon 0 < d + \frac{h}{2} < 1\Bigr\},
\]
and it is determined, from \eqref{sistema riscalato}, by
\begin{equation}\label{sistema distanza-baricentro}
    \begin{cases}
        \displaystyle \dot{h} = 4s^2h\biggl[2 \log \biggl(\frac{4h}{4 - 4d^2 + h^2}\biggr)+\frac{4 - 4d^2 - h^2}{4 - 4d^2 + h^2}-2d^2\biggr]\eqqcolon f(h,d), \\[3mm]
        \displaystyle \dot{d} = \frac{2s^2h^2d\,(4d^2-h^2)}{4 - 4d^2 + h^2}\eqqcolon g(h,d),\\[3mm]
       \displaystyle h(0) = y^{1,0}_1-y^{2,0}_1 \eqqcolon h_0\in(0,2),\\[2mm]
       d(0) =  \frac12\bigl(y^{1,0}_1+y^{2,0}_1\bigr) \eqqcolon d_0\in[0,1),
    \end{cases}  
\end{equation}
with the constraint that $(h_0,d_0)\in\mathcal{R}$.

System \eqref{sistema distanza-baricentro} can more conveniently be re-written by introducing $\zeta\colon [0,+\infty)\to\mathbb{R}^2$ defined by $t\mapsto \zeta(t)\coloneqq (h(t),d(t))$, and the vector field $\Phi\colon\overline{\mathcal{R}}\to\mathbb{R}^2$ defined by $\Phi(h,d)\coloneqq(f(h,d),g(h,d))$ in $\mathcal{R}$, and extended by continuity on $\partial\mathcal{R}$ by
\begin{equation}\label{estensione}
    \begin{cases}
    \Phi(0,d) = (0,0), \\
    \displaystyle \Phi\Bigl(h,1-\frac{h}{2}\Bigr) = \big( -2s^2h\,(1-h)(2-h),s^2h\,(1-h)(2-h) \big).
    \end{cases}
\end{equation}
Thus, the dynamics~\eqref{sistema distanza-baricentro} can be extended to
\begin{equation}
    \begin{cases}\label{sistema distanza-baricentro nuovo}
        \dot{\zeta}=\Phi(\zeta),\\
        \zeta(0) = (h_0\,,d_0)\eqqcolon\zeta_0\in\overline{\mathcal{R}}.
    \end{cases}
\end{equation}

The next result ensures that the dynamics \eqref{sistema distanza-baricentro nuovo} is well posed for any initial datum $\zeta_0\in\overline{\mathcal{R}}\setminus\mathcal{L}$, where $\mathcal{L}\coloneqq \partial\overline{\mathcal{R}}\cap\{h=0\}$. 
Notice that the case in which $\zeta_0\in\mathcal{R}$ describes a physical situation with both disclinations inside the domain $B_1$\,; the case in which $\zeta_0=(h_0,1-h_0/2)$ for $h_0\in(0,2)$, corresponds to the case of $y_1^1=1\in\partial B_1$ and $y_1^2\in(-1,1)$, the disclination $y^1$ sitting at the boundary and the disclination $y^2$ being alone in the domain.
As we will see, when one disclination is at the boundary, it does not affect the dynamics of the remaining one.
Yet, considering this case is necessary to establish the well-posedness result (see Theorem~\ref{thm_wellposedness} below).

The case in which $\zeta_0=(0,d_0)$, for a certain $d_0\in[0,1]$ will be commented on in Remark~\ref{remark_32} below.

\begin{theorem}\label{thm_wellposedness}
For any initial datum $\zeta_0\in\overline{\mathcal{R}}\setminus\mathcal{L}$, problem~\eqref{sistema distanza-baricentro nuovo} admits a unique solution $\zeta\in C^1([0,+\infty);\overline{\mathcal{R}}\setminus\mathcal{L})$, satisfying $\zeta(0)=\zeta_0$\,.
\end{theorem}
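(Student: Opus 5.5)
Our plan is to combine local Lipschitz continuity of $\Phi$ on $\overline{\mathcal{R}}\setminus\mathcal{L}$ with an invariance argument and an a priori confinement to a compact subset. This gives local existence and uniqueness (Cauchy--Lipschitz) and then continuation to all times.

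\emph{Step 1: regularity of $\Phi$.} In $\mathcal{R}$ the functions $f,g$ are smooth. Indeed $4-4d^2+h^2>0$ there, since $d<1$, and the argument of the logarithm is positive for $h>0$. The delicate part of the boundary is the segment $\{d+h/2=1\}$, where the denominator $4-4d^2+h^2$ stays positive as long as $h>0$. The only degeneration of the logarithm therefore occurs at $h=0$, that is, on $\mathcal{L}$. We will check two things.
\begin{itemize}
\item $f$ and $g$ are in fact the restrictions of functions that are real-analytic in a neighbourhood of each point of $\overline{\mathcal{R}}\setminus\mathcal{L}$.
\item The boundary values \eqref{estensione} agree with these formulas. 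Setting $d=1-h/2$ gives $4-4d^2+h^2=4h(2-h)/2\cdot$(a short computation), and the logarithm becomes $\log\bigl(1/(2-h)\bigr)$-type expressions. We verify that they match.
\end{itemize}
Should some term fail to match exactly, we instead define the extension on a neighbourhood by the analytic formula. Either way $\Phi$ is locally Lipschitz on $\overline{\mathcal{R}}\setminus\mathcal{L}$, and extends to a locally Lipschitz field on an open set $U\supset\overline{\mathcal{R}}\setminus\mathcal{L}$. Picard--Lindelöf then gives a unique maximal $C^1$ solution in $U$.

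\emph{Step 2: invariance of $\overline{\mathcal{R}}$.} The boundary pieces are handled separately.
\begin{itemize}
\item On $\{d=0\}$ we have $g(h,0)=0$, so the axis is invariant.
\item On the edge $d+h/2=1$ we use \eqref{estensione}: $\frac{d}{dt}(d+h/2)=s^2h(1-h)(2-h)-s^2h(1-h)(2-h)=0$. So the edge is invariant; physically, $y^1$ stays on $\partial B_1$.
\item The edge $d-h/2=-1$ is absent because $d\ge0$.
\end{itemize}
Since $\Phi$ is Lipschitz and these boundary sets are themselves invariant (the vector field is tangent to them), uniqueness prevents trajectories starting in $\overline{\mathcal{R}}$ from crossing them. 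This is a Nagumo-type argument: a trajectory leaving through a boundary point would contradict uniqueness of the trajectory lying in the boundary through that point. Corners such as $(2,0)$ need a separate check. At $h=2$, $d=0$ we have $f<0$, by explicit evaluation: $\log(8/8)=0$, the fraction equals $0$, and the last term is $0$. This yields $f=0$ there, so $(2,0)$ is stationary, which is again fine by uniqueness.

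\emph{Step 3: global existence, the main obstacle.} We must rule out that the solution reaches $\mathcal{L}$ in finite time, that is, $h(t)\to0$ as $t\uparrow T<\infty$. We use the structure $f(h,d)=h\,F(h,d)$ with $F\ge -c\bigl(1+|\log h|\bigr)$ on $\overline{\mathcal{R}}$. Then
\[
\frac{d}{dt}\log h\ge -c\bigl(1+|\log h|\bigr).
\]
By Grönwall applied to $u=-\log h$ we get $\dot u\le c(1+u)$, so $u$ grows at most exponentially and stays finite on bounded intervals. Hence $h(t)>0$ for every finite $t$. Together with boundedness of $\overline{\mathcal{R}}$, the solution stays in a compact subset of $\overline{\mathcal{R}}\setminus\mathcal{L}$ on each $[0,T]$. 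The standard continuation theorem then gives existence on $[0,+\infty)$, and uniqueness follows from Step 1.
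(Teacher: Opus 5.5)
Your proof is correct and shares the paper's skeleton: local Lipschitz continuity of $\Phi$ away from $\mathcal{L}$, and uniqueness preventing a trajectory from reaching the invariant boundary pieces $\{d=0\}$ and $\{d+h/2=1\}$. The paper expresses the edge invariance through $g/f=-1/2$ and the reduction to \eqref{dinamica_y^2}. Your identity $\frac{\mathrm{d}}{\mathrm{d}t}(d+h/2)=g+f/2=0$ is the same fact, and it is cleaner at $E_2$, where $f=g=0$.

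The substantive difference is your Step~3. The paper handles the axis by citing \cite{CGMP2025} and the edge through the explicit $\eta$-equation. For interior data it then simply asserts that the maximal solution is global. But $\Phi$ is not Lipschitz on $\mathcal{L}$, so nothing in that argument excludes $h(t)\to0$ at a finite time. After such a time the solution would leave the set where uniqueness and continuation are available. Your Osgood-type estimate closes exactly this hole.

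The bound you need is true. On $\overline{\mathcal{R}}\setminus\mathcal{L}$ one has $4h\le 4-4d^2+h^2\le 8$, and the middle fraction in $f$ is at least $-1$. Hence $f/(4s^2h)\ge 2\log(h/2)-3$, which gives $\dot u\le c(1+|u|)$ for $u=-\log h$. This is consistent with the doubly exponential decay of $h_\ell$ in Proposition~\ref{riassunto_regime_convergente}. Your route is self-contained and uniform in the initial datum, at the price of one short estimate; the paper's is shorter but leaves global existence unjustified.

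Two slips need fixing:
\begin{itemize}
\item On the edge $d=1-h/2$ one has $4-4d^2+h^2=4h$, not $2h(2-h)$. So the logarithm is $\log 1=0$, the fraction equals $1-h/2$, and the analytic formulas give exactly $f=-2s^2h(1-h)(2-h)$ and $g=s^2h(1-h)(2-h)$, as in \eqref{estensione}. Your fallback of redefining $\Phi$ there is therefore unnecessary, and it would not be legitimate anyway, since $\Phi$ is prescribed.
\item At $(2,0)$ you write $f<0$ before correctly concluding $f=0$; only the latter holds.
\end{itemize}
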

\begin{proof}
We start by observing that $\Phi$ is locally Lipschitz-continuous on $\overline{\mathcal{R}} \setminus \mathcal{L}$.
Therefore, by standard results on ODE theory, for every choice of $\zeta_0 \in \overline{\mathcal{R}} \setminus \mathcal{L}$, there exists a unique local solution to \eqref{sistema distanza-baricentro nuovo}.

If $\zeta_0=(h_0,0)$, for some $h_0\in(0,2)$, then the dynamics is defined for all $t>0$ and, in this case, $d(t)=0$. 
This corresponds to the case of $y^1(t)=-y^2(t)$ for every $t>0$ and it has been thoroughly studied in \cite[Section~3.3]{CGMP2025}.

If $\zeta_0=(h_0,1-h_0/2)$, for some $h_0\in(0,2)$, then the dynamics corresponds to an initial condition for which the disclination $y^1$ starts off at the boundary of the domain and the disclination $y^2$ starts off in the interior of $B_1$\,. In this situation, since 
$$\frac{g(h,1-h/2)}{f(h,1-h/2)}=-\frac12,$$
the dynamics is confined to the edge $\{(h,d)=(h,1-h/2)\}$, so that $y^1$ neither moves nor influences the dynamics of $y^2$, which, in turn, behaves as the only defect in the domain.
The resulting equation of motion for $y^2\eqqcolon \eta$ is
\begin{equation}\label{dinamica_y^2}
    \dot \eta=2s^2\eta(1-\eta^2),\qquad \eta_{0}=1-h_0\in(-1,1),
\end{equation}
which is well defined for all $t>0$, and is the same that has been found and studied in \cite[Section~3.2]{CGMP2025}.

If $\zeta_0\in\mathrm{int}\,\mathcal{R}$, then the maximal solution is defined for $t\in[0,+\infty)$ and is confined in $\mathrm{int}\,\mathcal{R}$. 
Indeed, if there existed a time $\tilde t$ such that $\zeta(\tilde t) \in \{d=0\}\cup\{(h,1-h/2)\}$, then uniqueness of the solution of the two previous cases would be violated. 
\end{proof}

\begin{remark}\label{remark_32}
We notice that $\Phi$ loses Lipschitz continuity on $\{h=0\}$, so that uniqueness is not guaranteed there. 
Yet, if $\zeta_0=(0,d_0)$, then $\zeta(t)=(0,d_0)$ for every $t>0$ is a constant solution, see~\eqref{estensione}; 
on the other hand, if $\zeta_0\in\mathcal{R}$, then the velocity field $\Phi$ becomes infinitesimal in a right neighborhood of $\mathcal{L}$, suggesting that solutions can hit the boundary segment $\{h=0\}$ in infinite time. 
As a matter of fact, we will prove this in Proposition~\ref{riassunto_regime_convergente}, and this corresponds to the two disclinations colliding. This behavior is very interesting for us. 
Indeed, in \cite{CDLM24} it has been proved that upon rescaling by (the square of) the separation distance,
a colliding dipole of disclinations is energetically equivalent to an edge dislocation. 
Therefore, this suggests that when $t\mapsto h(t)$ vanishes, the dynamics $t\mapsto d(t)$ of the center of the dipole converges, after a suitable time rescaling, to that of an edge dislocation; see Section~\ref{sec_rescaling}.
\end{remark}

\section{Stability of the dynamics}\label{analysis_dynamics}
In this section, we study the equilibria of~\eqref{sistema distanza-baricentro nuovo}.
We will identify three isolated equilibrium points and we will single out the boundary segment~$\mathcal{L}$ as made of equilibrium points, and we discuss their stability and their mechanical interpretation. 

Moreover, we find that there are two regimes for the pair of disclinations, namely one in which it breaks and the two defects move away from one another ($\zeta(+\infty)=(2,0)$, discussed in Section~\ref{section diverging}), and one in which the dipole collides ($\zeta(+\infty)=(0,d_\infty)$, for some $d_\infty\in(0,1)$, discussed in Section~\ref{section converging}).

We recall that an \emph{equilibrium point} for system~\eqref{sistema distanza-baricentro nuovo} is a point $\zeta_*\in\overline{\mathcal{R}}$ such that $\Phi(\zeta_*)=(0,0)$.
Moreover, we say that an equilibrium point $\zeta_*\in\overline{\mathcal{R}}$ is \emph{locally asymptotically stable} if there exists a neighborhood $\mathcal{N}$ of $\zeta_*$ in $\overline{\mathcal{R}}$ such that for every initial condition $\zeta_0\in\mathcal{N}$, the corresponding solution~$\zeta$ to~\eqref{sistema distanza-baricentro nuovo} satisfies that $\lim_{t\to+\infty} \zeta(t)=\zeta_*$\,.
We say that an equilibrium point $\zeta_*\in\overline{\mathcal{R}}$ is \emph{unstable} if the dynamics associated with an initial condition $\zeta_0$ close to $\zeta_*$ grows far from $\zeta_*$\,.

One way to deduce the asymptotic stability of an equilibrium point~$\zeta_*$ is to study the linearization of~\eqref{sistema distanza-baricentro nuovo} at $\zeta_*$: if the Jacobian matrix $J\Phi(\zeta_*)$ has (complex) eigenvalues with negative real part, then~$\zeta_*$ is (locally) asymptotically stable (see, \emph{e.g.}, \cite{HSD}).

\smallskip

We now characterize the set 
\begin{equation*}
\mathcal{Z}_{\Phi}\coloneqq \bigl\{(h,d)\in\overline{\mathcal{R}}: \Phi(h,d)=(0,0) \bigr\}=\mathcal{Z}_f\cap \mathcal{Z}_g
\end{equation*}
of equilibria of~\eqref{sistema distanza-baricentro nuovo}, where
\begin{equation}\label{eq_zeri}
    \mathcal{Z}_f\coloneqq \mathcal{A}\cup\mathcal{L}\cup\{(2,0)\}
    \quad\text{and}\quad
    \mathcal{Z}_g\coloneqq \mathcal{B} \cup \mathcal{L} \cup \mathcal{S}.
\end{equation}
In \eqref{eq_zeri}, $\mathcal{A}$ is the arc along which $\tilde{f}(h,d)\coloneqq f(h,d)/4s^2h$ vanishes; $\mathcal{B}=\{(h,0),h\in[0,2]\}$, and $\mathcal{S}=\{(h,h/2),h\in[0,1]\}$.
Thus,
\begin{equation*}\label{eq_ZPhi}
    \mathcal{Z}_{\Phi}=\mathcal{L}\cup \{E_1,E_2,E_3\},
\end{equation*}
the three isolated points being
\[
E_1 \coloneqq (h_*, 0),\qquad 
E_2 \coloneqq (1, \tfrac{1}{2}), \qquad 
E_3 \coloneqq (2, 0),
\]
where $h_*\approx 0.8$ is the unique root of the function $f$ restricted to $\mathrm{int}\,\mathcal{B}$, namely of the function $h\mapsto f(h,0)$ for $h\in(0,2)$.
\begin{remark}\label{osservazione}
   The point $E_1$ corresponds to the symmetric arrangement $y^1_1=-y^2_1\approx0.4$, which was found in \cite[Section~3.3.1]{CGMP2025} (where our~$h_*$ corresponds to $\Delta_{\mathrm{eq}}$) as the stationary configuration of a disclination pair placed symmetrically with respect to the center of the disk. 
   
   The point $E_2\in \{(h,1-h/2)\}\cap\partial\mathcal{R}$ is the configuration where the disclination $y^1$ lies on the boundary, while the disclination $y^2$ sits at the center of the domain. 
   This corresponds to the equilibrium point $\eta=0$ for the corresponding dynamics~\eqref{dinamica_y^2}, which is that of an isolated disclination in $B_1$\,, see \cite[Section~3.2]{CGMP2025}. 
   Following the same interpretation, the point $E_3$ corresponds to the limiting situation of both disclinations $y^1$ and $y^2$ being located at antipodal points of $\partial B_1$\,. 
   Notice that $E_3$ corresponds to the equilibrium point $\eta=-1$ for~\eqref{dinamica_y^2}.
   In this case, no dynamics occurs.
\end{remark}

The following proposition characterizes the nature of the isolated equilibrium points $E_i$, $i=1,2,3$.
\begin{prop}[Characterization of the stationary configurations]\label{caratterizzazione punti stazionari}
Let $E_i$\,, $i = 1,2,3$, be the isolated equilibrium points of~\eqref{sistema distanza-baricentro nuovo}. 
Then $E_1$ and $E_2$ are unstable equilibria and $E_3$ is a locally asymptotically stable equilibrium.
\end{prop}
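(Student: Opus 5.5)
The plan is to linearize $\Phi$ at each of $E_1$ and $E_3$ and read off the signs of the eigenvalues. For $E_2$, which lies on the invariant edge $\{(h,1-h/2)\}$, I will instead use the one-dimensional reduction~\eqref{dinamica_y^2}. Two structural facts simplify the Jacobians. First, $f$ is even in $d$, so $\partial_d f(h,0)=0$. Second, $g$ carries an explicit factor $d$, so $\partial_h g(h,0)=0$. Hence $J\Phi$ is diagonal at every point of $\mathcal{B}$:
\[
J\Phi(h,0)=\begin{pmatrix}\partial_h f(h,0) & 0\\ 0 & \partial_d g(h,0)\end{pmatrix},\qquad \partial_d g(h,0)=-\frac{2s^2h^4}{4+h^2}<0 .
\]

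\emph{Point $E_1=(h_*,0)$.} Since $f(h_*,0)=0$, we have $\partial_h f(h_*,0)=4s^2h_*\,\partial_h\tilde f(h_*,0)$, where
\[
\tilde f(h,0)=2\log\Bigl(\frac{4h}{4+h^2}\Bigr)+\frac{4-h^2}{4+h^2}.
\]
This function tends to $-\infty$ as $h\to0^+$, and it vanishes at $h=2$. By definition, $h_*$ is its unique zero in $(0,2)$, so $\tilde f(\cdot,0)$ is negative on $(0,h_*)$ and positive on $(h_*,2)$. To upgrade this sign change to $\partial_h\tilde f(h_*,0)>0$, I would compute
\[
\partial_h\tilde f(h,0)=\frac{2}{h}-\frac{4h}{4+h^2}-\frac{16h}{(4+h^2)^2}.
\]
Its numerator, after clearing denominators, is a polynomial in $h$, and I would check directly that it has a single sign change on $(0,2)$. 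This shows that $h_*$ is a simple zero. Thus $J\Phi(E_1)$ has one positive and one negative eigenvalue, so $E_1$ is a saddle and is therefore unstable.

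\emph{Point $E_2=(1,\tfrac12)$.} By the proof of Theorem~\ref{thm_wellposedness}, the edge $\{(h,1-h/2)\}$ is invariant, and on it the dynamics reduces to $\dot\eta=2s^2\eta(1-\eta^2)$. The point $E_2$ corresponds to $\eta=0$, where the linearization is $\dot\eta=2s^2\eta$ with $2s^2>0$. So trajectories starting on the edge arbitrarily close to $E_2$ move away from it, and any neighborhood of $E_2$ in $\overline{\mathcal{R}}$ contains such initial data. This already gives instability in the sense defined above. As a sanity check, I could also compute the full $2\times2$ Jacobian of the smooth extension of $\Phi$ near $E_2$ and look for a positive eigenvalue with eigenvector along $(1,-\tfrac12)$, but this is not needed.

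\emph{Point $E_3=(2,0)$.} This is the corner of $\overline{\mathcal{R}}$, so I first observe that the formulas for $f$ and $g$ are real-analytic in a full planar neighborhood of $(2,0)$. Indeed, $4-4d^2+h^2\approx 8$ there, and $4h/(4-4d^2+h^2)\approx1>0$. Hence the standard linearization theorem applies to this smooth extension. Since $\overline{\mathcal{R}}\setminus\mathcal{L}$ is forward invariant by Theorem~\ref{thm_wellposedness}, the resulting local attraction transfers to the dynamics in $\overline{\mathcal{R}}$. The diagonal entries are
\[
\partial_d g(2,0)=-8s^2,\qquad
\partial_h f(2,0)=8s^2\,\partial_h\tilde f(2,0)=8s^2\Bigl(1-1-\tfrac12\Bigr)=-4s^2 .
\]
Both eigenvalues are negative, so $E_3$ is locally asymptotically stable. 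Note that this computation also confirms that $\tilde f(\cdot,0)$ decreases to $0$ at $h=2$, which is consistent with the sign pattern used for $E_1$.

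I expect the main obstacle to be the rigorous proof that $h_*$ is a simple zero, i.e.\ that $\partial_h\tilde f(h_*,0)>0$ rather than merely $\geq0$. My first attempt would be a direct polynomial sign analysis of the numerator of $\partial_h\tilde f(h,0)$. Failing that, I would fall back on a degenerate-case argument: even if $\partial_h f(h_*,0)=0$, the strict sign change of $f(\cdot,0)$ across $h_*$ along the invariant line $\mathcal{B}$ still pushes trajectories on $\mathcal{B}$ away from $E_1$, which yields instability.
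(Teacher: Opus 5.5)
Your proof is correct. At $E_1$ and $E_3$ it follows the paper's argument: the paper also writes down the diagonal Jacobians $J\Phi(E_1)=\mathrm{diag}\bigl(\frac{256s^2}{(4+h_*^2)^2}-8s^2,\,-\frac{2h_*^4s^2}{4+h_*^2}\bigr)$ and $J\Phi(E_3)=-4s^2I_{2\times2}$. Your value $\partial_d g(2,0)=-8s^2$ is an arithmetic slip, since $-2s^2\cdot2^4/(4+2^2)=-4s^2$; the sign, and hence the conclusion, is unaffected.

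What you add at these two points is rigor. The paper only asserts that $\det J\Phi(E_1)<0$. This is equivalent to $(4+h_*^2)^2<32$, which the paper implicitly reads off from $h_*\approx0.8$. Your polynomial check settles it. The numerator of $\partial_h\tilde f(h,0)$ is $2(16-8h^2-h^4)$, which changes sign only at $h_c^2=4\sqrt2-4$. Together with $\tilde f(2,0)=0$ and $\tilde f(h,0)\to-\infty$ as $h\to0^+$, this forces $h_*<h_c$, so $\partial_h f(h_*,0)>0$; it also proves that $h_*$ is unique. Likewise, the paper does not comment on $E_3$ being a corner of $\overline{\mathcal R}$, which your smooth-extension remark handles.

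The genuinely different step is $E_2$. The paper computes the full Jacobian there, whose eigenvalues are $0$ and $2s^2$, and concludes instability from the positive eigenvalue. You instead restrict to the invariant edge and linearize $\dot\eta=2s^2\eta(1-\eta^2)$ at $\eta=0$. These are the same mechanism, since the $2s^2$-eigenvector is $(1,-\tfrac12)$, exactly the edge direction. Your route gives a self-contained argument that never invokes an instability theorem at a non-hyperbolic equilibrium lying on $\partial\mathcal R$, where one would also have to check that the unstable direction stays in $\overline{\mathcal R}$. The paper's computation, in exchange, gives the full local picture, including the neutral direction $(2,1)$, which is tangent to $\mathcal S$.
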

\begin{proof} 
By computing the Jacobian matrix of the vector field $\Phi$ and evaluating it at the points $E_i$\,, $i=1,2,3$, we find
$$J\Phi(E_1)=
\begin{pmatrix}
    \displaystyle\frac{256s^2}{(4 + h_*^2)^2}-8s^2 & 0\\ 
    0 & \displaystyle -\frac{2 h_*^4s^2}{4 + h_*^2}
\end{pmatrix},$$
which is already diagonal and with negative determinant, thus $E_1$ is unstable;
the eigenvalues of 
$$J\Phi(E_2)=
\begin{pmatrix}
    s^2 & -2s^2\\
    -s^2/2 & s^2
\end{pmatrix},$$
are $0$ and $2s^2$, so that $E_2$ is unstable; finally
$J\Phi(E_3)=-4s^2 I_{2\times 2}$ proves the local asymptotic stability of $E_3$\,.
\end{proof}

To study the dynamics out of these equilibrium points, as well as to investigate the stability of the set $\mathcal{L}$, it is convenient to single out the following three regions contained in $\mathrm{int}\,\mathcal{R}$ (see Figure~\ref{fig:regioni}):
\begin{align*}
\mathcal{R}_1 &\coloneqq \bigl\{(h,d)\in \overline{\mathcal{R}}\colon f( h,d)<0 \text{ and } g(h,d)> 0 \bigr\} ,\\
\mathcal{R}_2 &\coloneqq \bigl\{(h,d)\in \overline{\mathcal{R}} \colon f(h,d)<0 \text{ and } g(h,d)<0 \bigr\},\\
\mathcal{R}_3 &\coloneqq \bigl\{ (h,d)\in \overline{\mathcal{R}}\colon f(h,d)> 0 \text{ and } g(h,d)<0 \bigr\}.
\end{align*}
We notice that $\mathcal{A}=\partial\mathcal{R}_2\cap\partial\mathcal{R}_3$ and that $\mathcal{S}=\partial\mathcal{R}_1\cap\partial\mathcal{R}_2$\,.
\begin{figure}[h]
    \centering
    \includegraphics[width=1\linewidth]{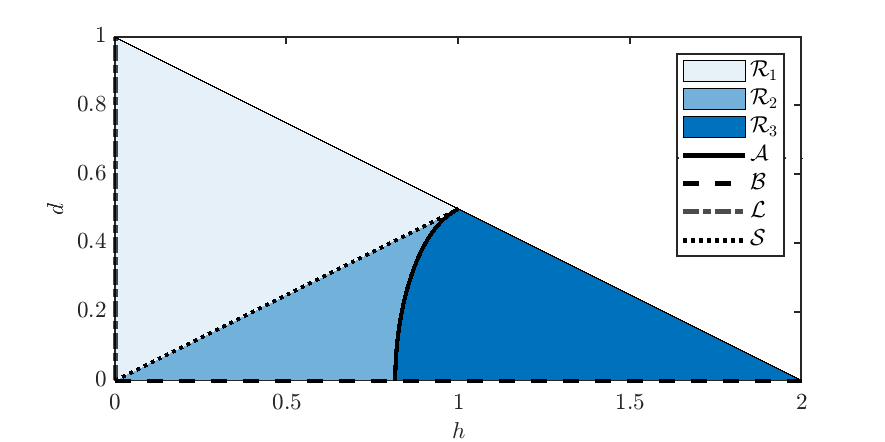}
    \caption{Plot of the regions $\mathcal{R}_i$\,, $i=1,2,3$.}
    \label{fig:regioni}
\end{figure}

We will prove that if $\zeta_0\in\overline{\mathcal{R}}_3\setminus\{E_1,E_2\}$, then the dynamics will converge to $E_3$ (see Proposition~\ref{AimplicaE3}), whereas, if $\zeta_0\in\overline{\mathcal{R}_1\cup\mathcal{R}_2}\setminus\mathcal{A}$, then the dynamics will converge to a point in $\mathcal{L}$ (see, Proposition~\ref{riassunto_regime_convergente}).
In other terms, the arc $\mathcal{A}$ divides the dynamics of the pair in a diverging regime, characterized by $\zeta(+\infty)=E_3$, and in a converging regime, characterized, instead, by $\zeta(+\infty)\in\mathcal{L}$.

\subsection{Diverging disclinations regime}\label{section diverging}
We now analyse the region where the disclinations move toward the boundary along opposite direction. 
In the $(h,d)$ plane, this is equivalent to proving that the dynamics tends to the equilibrium point $E_3$\,.

We start by proving a structural property of the arc $\mathcal{A}_*$, with $\mathcal{A}_* \coloneqq \mathcal{A}\setminus\{E_1,E_2\}$, namely that it is the graph of a function.
\begin{lemma}\label{A_is_a_graph}
    There exists a strictly increasing function $d_{\mathcal{A}_*}\colon (h_*,1)\to(0,1/2)$ such that $\mathcal{A}_*=\mathrm{graph}(d_{\mathcal{A}_*})=\{(h,d)\in\mathcal{R}: h\in(h_*,1), d=d_{\mathcal{A}_*}(h)\}$.
\end{lemma}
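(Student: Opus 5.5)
The natural route is to study $\tilde f(h,d)=2\log\bigl(\frac{4h}{4-4d^2+h^2}\bigr)+\frac{4-4d^2-h^2}{4-4d^2+h^2}-2d^2$ and to observe that it depends on $d$ only through $u\coloneqq d^2$, and on $h$ in a structured way. Set $a\coloneqq 4-4u$. Then
$$\tilde f=2\log(4h)-2\log(a+h^2)+\frac{a-h^2}{a+h^2}-2u=2\log(4h)-2\log(a+h^2)+1-\frac{2h^2}{a+h^2}-2u .$$
I would first show that $\tilde f$ is strictly monotone in $d$ on $\mathcal{R}\cap\{d>0\}$. Differentiating in $u$, using $\partial_u a=-4$, gives
$$\partial_u\tilde f=\frac{8}{a+h^2}-\frac{8h^2}{(a+h^2)^2}-2=\frac{8a-2(a+h^2)^2}{(a+h^2)^2}.$$
I need the sign of $4a-(a+h^2)^2$. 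In $\mathcal{R}$ we have $d+h/2<1$, so $d<1-h/2$. The key computation is that at the boundary $d=1-h/2$ one gets $a=4-(2-h)^2=4h-h^2$, so $a+h^2=4h$ and $4a-(a+h^2)^2=16h-4h^2-16h^2$. This is not sign-definite. So I would instead work with $\partial_h$, or combine both.

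Plan B is the implicit function theorem in $h$. Compute
$$\partial_h\tilde f=\frac{2}{h}-\frac{4h}{a+h^2}-\frac{4ha}{(a+h^2)^2},$$
which has the sign of $(a+h^2)^2-2h^2(a+h^2)-2h^2a=a^2-2ah^2-h^4$. On $\mathcal{A}$ the relation $\tilde f=0$ holds, and I would use it to eliminate the logarithm and pin down the signs of the partial derivatives. I expect to find $\partial_h\tilde f<0$ and $\partial_d\tilde f<0$ along $\mathcal{A}_*$ (or the two with opposite orientation), so that $\mathcal{A}_*$ is locally a graph $d=d_{\mathcal{A}_*}(h)$ with slope $-\partial_h\tilde f/\partial_d\tilde f$.

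The global statement then follows from the following steps.
\begin{enumerate}
\item On each horizontal segment $\{d=\text{const}\}\cap\mathcal{R}$, $\tilde f$ has at most one zero. This follows because $\tilde f\to-\infty$ as $h\to0$, together with a one-hump argument: the numerator $a^2-2ah^2-h^4$ is decreasing in $h^2$, so $\tilde f(\cdot,d)$ increases and then decreases, and it has a single zero provided its value at the right endpoint $h=2-2d$ is positive.
\item Check the boundary values. Using \eqref{estensione}, $f(h,1-h/2)=-2s^2h(1-h)(2-h)$ is positive for $h>1$ and negative for $h<1$, and $f(h,0)$ vanishes only at $h_*$.
\item Count zeros: for $d\in(0,1/2)$ the edge $d=1-h/2$ corresponds to $h=2-2d>1$, where $f>0$, so exactly one zero exists. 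For $d\ge1/2$ the endpoint value is $\le0$ and, by the hump monotonicity, no zero exists. This yields a function $h_{\mathcal{A}}\colon(0,1/2)\to(h_*,1)$.
\end{enumerate}

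Strict monotonicity of the inverse then comes from the sign of the implicit derivative, namely $\partial_d\tilde f\ne0$ on $\mathcal{A}_*$ together with orientation, and the endpoint limits $E_1,E_2$ follow by continuity. Inverting $h_{\mathcal{A}}$ gives the strictly increasing $d_{\mathcal{A}_*}$.

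The main obstacle is the sign of $\partial_d\tilde f$ (equivalently $\partial_u\tilde f$) on the zero set, since it is not sign-definite on all of $\mathcal{R}$. I would first try to prove that $4a<(a+h^2)^2$ holds on $\mathcal{A}_*$, using $\tilde f=0$ to restrict the region, for instance by showing that $\mathcal{A}_*$ lies in $\{h>h_*\}\approx\{h>0.8\}$ where the inequality is easier to verify. Failing that, I would fall back on a careful one-variable analysis along horizontal lines.
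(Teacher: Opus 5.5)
Your slicing strategy is sound as far as it goes. It even supplies something the paper's purely local use of the Implicit Function Theorem leaves implicit: on each line $d=\mathrm{const}$ with $d\in(0,1/2)$, $\tilde f$ rises from $-\infty$ and ends at the edge value $\tilde f(2-2d,d)=d(1-2d)>0$, so it has exactly one zero $h_{\mathcal A}(d)$, which lies on the rising branch.

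However, the proposal stops exactly at the step on which the paper's proof rests: $\partial_d\tilde f<0$ on $\mathcal A_*$, i.e.\ $(a+h^2)^2>4a$ there. The paper asserts this; you flag it as the main obstacle and leave it open. Nothing else replaces it. The slice argument only gives $h_{\mathcal A}(d)\in(0,2-2d)$, not $h_{\mathcal A}(d)\in(h_*,1)$. It also gives neither injectivity nor monotonicity of $h_{\mathcal A}$. So neither the existence of $d_{\mathcal A_*}$ as its inverse nor its strict increase is established. Your anticipated signs are also off. Since the zero lies on the rising branch, $\partial_h\tilde f>0$ on $\mathcal A_*$. It is this, together with $\partial_d\tilde f<0$, that gives $d_{\mathcal A_*}'=-\partial_h\tilde f/\partial_d\tilde f>0$; two negative partials would produce a decreasing graph.

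The missing step can be supplied along the lines you hint at.
\begin{itemize}
\item For $d\in[0,1/2]$ one has $a\in[3,4]$, and $2\sqrt a-a$ is decreasing there. Hence $(a+c^2)^2>4a$ on this whole range whenever $c^2>2\sqrt3-3\approx0.464$.
\item Take $c=h_*$; this qualifies because $\tilde f(0.7,0)<0$ gives $h_*>0.7$. Then $\tilde f(h_*,\cdot)$ is strictly decreasing on $[0,1/2]$, so $\tilde f(h_*,d)<\tilde f(h_*,0)=0$ for $d\in(0,1/2)$. The slice structure then forces $h_{\mathcal A}(d)>h_*$.
\item Consequently $(a+h^2)^2>(a+h_*^2)^2>4a$ on $\mathcal A_*$. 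That is, $\partial_d\tilde f=2d\,\partial_u\tilde f<0$, so $h_{\mathcal A}'>0$.
\item The limits $h_{\mathcal A}(0^+)=h_*$ and $h_{\mathcal A}(1/2^-)=1$ follow by continuity and uniqueness of the zeros on the closing slices $d=0$ and $d=1/2$. They give the bijection onto $(h_*,1)$.
\end{itemize}

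Separately, your claim that slices with $d\ge1/2$ contain no zero does not follow from the hump shape alone. A profile that rises and then falls to a nonpositive endpoint value can cross zero twice. You need that these slices lie entirely on the rising branch. This holds because at the edge point $h=2-2d\le1$ one has $a=4h-h^2$, whence $a^2-2ah^2-h^4=2h^2(h^2-8h+8)>0$. Since this numerator decreases in $h$ along the slice, it is positive on the whole slice.
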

\begin{proof}
    The result follows from an application of the Implicit Function Theorem.
    Indeed, it is easy to verify that $\partial_d \tilde{f}(h,d)<0$ for every $(h,d)\in\mathcal{A}_*$. 
    Therefore, the condition $\tilde{f}(h,d)=0$ can be inverted on $\mathcal{A}_*$ and this proves that there exists $d_{\mathcal{A}_*}\colon (h_*,1)\to (0,1/2)$ such that $\mathcal{A}_*=\mathrm{graph}(d_{\mathcal{A}_*})$. 
    The strict monotonicity of $d_{\mathcal{A}_*}$ follows from implicit differentiation:
    $$d_{\mathcal{A}_*}'(h)=-\frac{\partial_h\tilde{f}(h,d_{\mathcal{A}_*}(h))}{\partial_d\tilde{f}(h,d_{\mathcal{A}_*}(h))},$$
    and an explicit verification shows that $d_{\mathcal{A}_*}'(h)>0$ for every $h\in(h_*,1)$.
\end{proof}

We are now ready to prove the main result in this section.
\begin{prop}[Diverging regime]\label{AimplicaE3}
    For every $\zeta_0\in\overline{\mathcal{R}}_3\setminus\{E_1,E_2\}$, the unique solution to~\eqref{sistema distanza-baricentro nuovo} is such that 
    \begin{equation}\label{E3assorbe}
    \lim_{t\to+\infty} \zeta(t)=E_3\,.
    \end{equation}
\end{prop}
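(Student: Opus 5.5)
The plan is to show that $\overline{\mathcal{R}}_3\setminus\{E_1,E_2\}$ is forward invariant, and then to use monotonicity of both coordinates inside it.

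\emph{Step 1: forward invariance.} In $\mathcal{R}_3$ we have $f>0$ and $g<0$. So along any trajectory in $\mathcal{R}_3$, $h$ is strictly increasing and $d$ is strictly decreasing. The boundary of $\mathcal{R}_3$ inside $\overline{\mathcal{R}}$ consists of four pieces:
\begin{itemize}
\item the arc $\mathcal{A}_*$;
\item the segment $\mathcal{B}$ of $\{d=0\}$ with $h\in[h_*,2]$;
\item the portion of the edge $\{d=1-h/2\}$ with $h\in[1,2]$;
\item the points $E_1,E_2,E_3$.
\end{itemize}
The first step is to treat each piece. By Theorem~\ref{thm_wellposedness}, the segment $\{d=0\}$ and the edge $\{d=1-h/2\}$ are invariant. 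On $\{d=0\}$ the dynamics reduces to $\dot h=f(h,0)$, which is positive for $h\in(h_*,2)$. On the edge it reduces to $\dot\eta=2s^2\eta(1-\eta^2)$ with $\eta=1-h\in(-1,0)$, so $\eta\to-1$, that is $h\to 2$. Hence initial data on these two pieces converge to $E_3$ directly.

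On $\mathcal{A}_*$ we have $f=0$ and $g<0$, so the field points straight down in $d$. By Lemma~\ref{A_is_a_graph}, $\mathcal{A}_*$ is the graph of an increasing function $d_{\mathcal{A}_*}$, and $\mathcal{R}_3$ lies below/right of this graph. I will check the sign of $\tilde f$ across the graph using $\partial_d\tilde f<0$, which should give $\tilde f>0$ for $d<d_{\mathcal{A}_*}(h)$. Then a downward velocity enters $\mathcal{R}_3$. So trajectories starting on $\mathcal{A}_*$ enter $\mathcal{R}_3$ immediately and cannot exit through $\mathcal{A}_*$ later: at a first exit time $\dot d<0$, $\dot h=0$, but the graph is increasing, so the trajectory would move into $\{d<d_{\mathcal{A}_*}(h)\}$. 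Trajectories cannot cross $\{d=0\}$ or the edge by uniqueness, as in Theorem~\ref{thm_wellposedness}.

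\emph{Step 2: convergence.} Inside $\mathcal{R}_3$, $h(t)$ is increasing and bounded by $2$, and $d(t)$ is decreasing and bounded below by $0$. So $\zeta(t)\to\zeta_\infty\in\overline{\mathcal{R}}_3$. Since $\Phi$ is continuous, $\zeta_\infty$ must be an equilibrium, that is $\zeta_\infty\in\{E_1,E_2,E_3\}$, because $\mathcal{L}$ is not in $\overline{\mathcal{R}}_3$ (there $h\ge h_*>0$).

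Excluding $E_2$ is easy, since $h_\infty>h_0\ge$ any point to its left, and $d$ decreased strictly while $E_2$ has the maximal $d$-value $1/2$ on $\overline{\mathcal{R}}_3$.

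Excluding $E_1$ is the main obstacle: a trajectory with $h$ increasing could in principle approach $E_1=(h_*,0)$ from the right. I plan to use monotonicity. If $h(t)\uparrow h_\infty=h_*$, then $h(t)<h_*$ for all $t$. But every point of $\overline{\mathcal{R}}_3$ satisfies $h\ge h_*$: the arc $\mathcal{A}_*$ lies over $(h_*,1)$, and $\mathcal{R}_3$ lies to the right of $\mathcal{A}$. I will verify this from the graph structure and the sign of $f$ near $\{h\text{ small}\}$; should it fail, I will fall back on the linearization in Proposition~\ref{caratterizzazione punti stazionari}, where the stable direction of $E_1$ is the $d$-axis $\{h=h_*\}$. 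So $h(t)$ strictly increasing from $h_0\ge h_*$ with limit $h_*$ is impossible unless $h_0=h_*$, $d_0=0$, which is the excluded case $\zeta_0=E_1$. Hence $\zeta_\infty=E_3$, proving \eqref{E3assorbe}.
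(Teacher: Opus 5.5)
Your proposal is correct and follows the same route as the paper: entry into $\mathcal{R}_3$ from $\mathcal{A}_*$ (where $\Phi=(0,g)$ with $g<0$), forward invariance of $\mathcal{R}_3$, and monotone convergence of $h$ and $d$. You also supply details the paper leaves implicit. These are the first-exit argument on $\mathcal{A}_*$, which really rests on $\partial_d\tilde f\,g>0$ there rather than on the monotonicity of $d_{\mathcal{A}_*}$, and the exclusion of $E_1$ and $E_2$ as limits via $\overline{\mathcal{R}}_3\subset\{h\ge h_*\}$ (with equality only at $E_1$) and $d_0<1/2$.
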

\begin{proof}
    Owing to Theorem~\ref{thm_wellposedness}, the dynamics \eqref{sistema distanza-baricentro nuovo} is well posed.
    By Lemma~\ref{A_is_a_graph}, if $\zeta_0\in \mathcal{A}_*$\,, then $\dot{\zeta}(0)=\Phi(\zeta_0)=(0,g(h_0,d_0))$, with $g(h_0,d_0)<0$. 
    Thus, $\zeta(t)$ enters the region $\mathrm{int}\,\mathcal{R}_3$ as soon as~$t$ is positive.
    Once $\zeta(\bar t)\in\mathcal{R}_3$ for a certain $\bar t\geq0$, then $\zeta(t)\in\mathcal{R}_3$ for every $t\geq\bar t$.
    By uniqueness and by the monotonicity of the components of $\Phi$ in $\mathcal{R}_3$\,, \eqref{E3assorbe} follows.
\end{proof}

\begin{remark}
Reading Proposition~\ref{AimplicaE3} for the dynamics of the two disclinations yields that, for large times, they behave as two isolated defects, whose motion is only due to the attraction to the boundary.
Indeed, by considering the leading order of~\eqref{sistema distanza-baricentro nuovo} for $\zeta\to E_3$\,, \emph{i.e.}, $h\to2$ and $d\to0$, we have (see \eqref{sistema distanza-baricentro})
\begin{equation*}\label{divergente_semplificato}
    \begin{cases}
        \displaystyle \dot{h} = 4s^2(2-h)\\
        \dot{d}=-4s^2d\\
        h(0)=h_0\,, \, d(0) = d_0
    \end{cases}
\end{equation*}
(we can assume $d_0\ll1$ without loss of generality), whose solution is
\begin{equation*}\label{dinamica_divergente}
    h(t) = 2-(2-h_0) \exp(-4s^2t),
    \quad\text{and}\quad
    d(t) = d_0 \exp(-4s^2 t).
\end{equation*}
Recalling~\eqref{cambiamento variabili}, one immediately sees that the dynamics of the defects is
\begin{equation*}
    y^k(t)=(-1)^{k+1}-((-1)^{k+1}-y^{k,0})\exp(-4s^2t),
\end{equation*}
which is consistent with the results in \cite[Section~3.2]{CGMP2025}, where an isolated disclination tends to the boundary of the domain.
\end{remark}

\subsection{Converging disclinations regime}\label{section converging}
Let us focus on the region where the disclination pair tends to collapse, namely when $\zeta \to (0, d_\infty)\in\mathcal{L}$ for some $d_\infty\in[0,1]$ as $t\to +\infty$. 

We start by proving the following proposition, which, in the language of dynamical systems, states that the line $\mathcal{L}$ is the $\omega-$limit set of the region $\overline{\mathcal{R}_1 \cup \mathcal{R}_2}\setminus\mathcal{A}$.

\begin{prop}\label{R2_in_R1}
    For any initial datum $\zeta_0 \in \overline{\mathcal{R}_1 \cup \mathcal{R}_2}\setminus\mathcal{A}$, the corresponding unique solution to \eqref{sistema distanza-baricentro nuovo} is such that $\zeta(+\infty) \in \mathcal{L}$.
\end{prop}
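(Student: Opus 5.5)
Since both components of the solution are monotone in the relevant regions, the plan is to show that the region $\overline{\mathcal{R}_1\cup\mathcal{R}_2}\setminus\mathcal{A}$ is forward invariant, obtain a limit from monotonicity and boundedness, and then pin down that limit through the classification of equilibria $\mathcal{Z}_\Phi=\mathcal{L}\cup\{E_1,E_2,E_3\}$. Well-posedness for $t\in[0,+\infty)$ comes from Theorem~\ref{thm_wellposedness}. The case $\zeta_0\in\mathcal{L}$ is trivial by \eqref{estensione}, so I assume $h_0>0$.

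First, I show invariance. In $\mathcal{R}_1\cup\mathcal{R}_2$ we have $f<0$, so $h$ strictly decreases. By definition $\mathcal{R}_3$ is separated from $\mathcal{R}_1\cup\mathcal{R}_2$ by the arc $\mathcal{A}$, on which $f=0$. A trajectory leaving through $\mathcal{A}$ would need $f$ to change sign. On $\mathcal{A}_*$, however, $\Phi=(0,g)$ with $g<0$ (as used in Proposition~\ref{AimplicaE3}), and $\mathcal{A}_*$ is the graph of the increasing function $d_{\mathcal{A}_*}$ by Lemma~\ref{A_is_a_graph}. At such a point the flow moves downward in $d$, that is, from the side of $\mathcal{R}_2$ (above the graph) into $\mathcal{R}_3$.

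I must therefore check which side $\mathcal{R}_2$ lies on. Near $\mathcal{A}_*$, the region $\mathcal{R}_3$ sits below or right of the graph, since $\partial_d\tilde f<0$ means $f>0$ below the arc. A downward velocity at the arc would then push points from $\mathcal{R}_2$ into $\mathcal{R}_3$. This is the key subtlety, and it means invariance cannot come from a naive sign check.

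Second, to avoid this, I exploit the slope comparison. Along a trajectory in $\mathcal{R}_2$ approaching $\mathcal{A}_*$, the velocity $(f,g)$ has $f<0$, so the trajectory moves left while $d_{\mathcal{A}_*}$ decreases to the left. A crossing from above to below would require $\dot d< d_{\mathcal{A}_*}'(h)\dot h$. Near the arc $\dot h=f\to0^-$, so the right-hand side tends to $0$ while $\dot d=g<0$, and crossing seems possible. Hence I first verify the claimed geometry against Figure~\ref{fig:regioni}. The statement excludes exactly $\mathcal{A}$, which suggests the intended picture is that $\mathcal{A}$ is the stable manifold of $E_1$ and $E_2$ (separatrix) rather than a transversal boundary.

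My plan is to use the correct orientation: $\mathcal{R}_2$ lies on the side where $f<0$. Writing $F(t)=\tilde f(\zeta(t))$, compute $\dot F=\partial_h\tilde f\,f+\partial_d\tilde f\,g$. At $F=0$ this equals $\partial_d\tilde f\,g>0$, since $\partial_d\tilde f<0$ and $g<0$. So $F$ cannot reach $0$ from below: $\{f<0\}$ is forward invariant. This is a clean barrier argument and is what I will use, checking the behaviour at $E_1$ and $E_2$ separately via $J\Phi$.

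Across $\mathcal{S}$, passing from $\mathcal{R}_1$ to $\mathcal{R}_2$ is harmless, since both regions have $f<0$. The sets $\mathcal{B}$ and the boundary edge are invariant by uniqueness, as in Theorem~\ref{thm_wellposedness}.

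Third, I obtain the limit. The component $h(t)$ is decreasing and bounded below by $0$, so $h\to h_\infty$. For $d$, the trajectory eventually stays in $\mathcal{R}_2$ (where $d$ decreases) or in $\mathcal{R}_1$ (where $d$ increases). Crossing $\mathcal{S}$ happens at most once, since $\mathcal{S}$ is only crossable from $\mathcal{R}_1$ into $\mathcal{R}_2$ by a similar derivative check on $4d^2-h^2$. So $d$ is eventually monotone and bounded, hence $d\to d_\infty$. The limit is an equilibrium lying in the closure of $\{f<0\}$ with $h_\infty<h_0\le$ the arc value, so it is not $E_3$.

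Excluding $E_1$ and $E_2$ is the main obstacle. At $E_1$, the $d$-direction is stable, but the $h$-eigenvalue is positive, so approach is possible only along $\mathcal{B}$. On $\mathcal{B}$ with $h<h_*$ we have $f<0$, hence $h$ moves away from $h_*$, and trajectories starting off $\mathcal{A}$ cannot converge to $E_1$. I would argue this by monotonicity of $h$: since $h(t)<h_0<h_*$ near there, $h_\infty\neq h_*$. The same argument applies to $E_2$ ($h_\infty<h_0\le1$ off $\mathcal{A}$). This leaves $h_\infty=0$, i.e.\ $\zeta(+\infty)\in\mathcal{L}$.
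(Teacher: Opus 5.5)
There is a genuine gap, and it cannot be closed, because the statement as written is false.

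\textbf{The barrier step.} Your invariance step has the sign reversed, and everything downstream depends on it. You correctly find $\dot F=\partial_d\tilde f\,g>0$ whenever $\zeta(t)\in\mathcal{A}_*$. But $\dot F>0$ at $F=0$ means $F$ increases \emph{through} zero, so orbits pass from $\{f<0\}$ into $\{f>0\}$. What you have actually shown is the forward invariance of $\mathcal{R}_3$, which is the fact behind Proposition~\ref{AimplicaE3}, not the forward invariance of $\{f<0\}$.

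\textbf{A counterexample.} The suspicion you raised and then set aside was correct. Set $\Delta\coloneqq d-d_{\mathcal{A}_*}(h)$. Then $\dot\Delta=g-d_{\mathcal{A}_*}'(h)\,f\to g<0$ as $\Delta\to0^+$. Hence every point of $\mathcal{R}_2$ close enough to $\mathcal{A}_*$ reaches $\mathcal{A}_*$ in finite time, enters $\mathcal{R}_3$, and converges to $E_3$. For instance, $\zeta_0=(0.9,0.4)\in\mathcal{R}_2$ has $\Phi(\zeta_0)\approx(-0.009,-0.026)\,s^2$. The arc $\mathcal{A}_*$ passes through $(0.9,0.393)$ with slope $\approx1.85$, so $\dot\Delta\approx-0.010\,s^2$, and $\dot\Delta$ stays negative until the crossing.

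\textbf{The exclusion of $E_1$.} This fails for a related reason. $\mathcal{R}_2$ contains points with $h\in(h_*,1)$, so you cannot assume $h_0<h_*$. Moreover, $\mathcal{R}_2$ is backward invariant: the forward flow crosses both $\mathcal{S}$ and $\mathcal{A}_*$ transversally out of $\mathcal{R}_2$, so backward orbits cannot leave through them. Consider the $d>0$ branch $\Gamma$ of the stable manifold of the saddle $E_1$. It cannot lie in $\mathcal{R}_3$, since there $h>h_*$ and $h$ increases. So $\Gamma$ is an orbit in $\mathcal{R}_2$ emanating from $E_2$, and initial data on $\Gamma$ converge to $E_1\notin\mathcal{L}$.

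\textbf{A smaller slip.} On $\mathcal{S}$ one has $\frac{\mathrm{d}}{\mathrm{d}t}(4d^2-h^2)=-2hf>0$. So $\mathcal{S}$ is crossed from $\mathcal{R}_2$ into $\mathcal{R}_1$, not the other way round. Your ``at most one crossing'' conclusion still holds, because $\mathcal{R}_1$ is forward invariant.

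\textbf{Comparison with the paper, and a correct version.} The paper's own proof does not settle this point either. It asserts that $\zeta_0$ lies outside the basin of $E_3$ by citing Proposition~\ref{AimplicaE3}. That proposition only gives $\overline{\mathcal{R}}_3\setminus\{E_1,E_2\}\subset$ basin of $E_3$, not the converse inclusion the proof needs. The paper's proof also never rules out convergence to $E_1$.

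The correct separatrix is $\Gamma$, not $\mathcal{A}$. Suppose the hypothesis is changed to require $\zeta_0\in\overline{\mathcal{R}_1\cup\mathcal{R}_2}$ lying strictly on the $\mathcal{S}$-side of $\Gamma$. Then your overall scheme goes through:
\begin{itemize}
\item this set is forward invariant, since $\Gamma$ is an orbit and $\mathcal{R}_1$ is forward invariant;
\item $h$ decreases and $d$ is eventually monotone, so the limit exists and is an equilibrium;
\item the limit has $h_\infty<1$, so it is neither $E_2$ nor $E_3$;
\item it is not $E_1$, because $\Gamma$ is the only non-stationary orbit in $\overline{\mathcal{R}}$ converging to that hyperbolic saddle.
\end{itemize}
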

\begin{proof}
By Theorem~\ref{thm_wellposedness}, the solution to~\eqref{sistema distanza-baricentro nuovo} is well defined for all positive times and is confined in the region~$\mathcal{R}$.
    Since $\zeta_0$ is not in the basin of attraction of the equilibrium point $E_3$ (see Proposition~\ref{AimplicaE3}), $\zeta(+\infty)\neq E_3$\,. 
    If $\zeta(+\infty)=\bar\zeta\in\mathcal{R}_1\cup\mathcal{R}_2$\,, this would mean that there exists an asymptotically stable equilibrium in the interior of $\mathcal{R}$, contradicting our previous analysis.
    Therefore, it must be that $\zeta(+\infty)\in\mathcal{L}$.
\end{proof}

In view of Proposition~\ref{R2_in_R1}, we now focus on a right neighbourhood $U_0\subset\mathcal{R}_1$ (to be specified presently) of $\mathcal{L}$.
In $U_0$\,, since $h\approx0$, we may analyze system \eqref{sistema distanza-baricentro nuovo} by considering only the leading order in $h$ for each variable, namely
\begin{equation}\label{accoppiato}
    \begin{cases}
        \displaystyle \dot h = 8s^2h\log\frac{h}{1-d^2}\eqqcolon\varphi(h,d)\,, \\[2mm]
        \displaystyle \dot d =2s^2h^2\frac{d^3}{1-d^2}\eqqcolon\gamma(h,d)\,, \\[2mm]
        h(0)=h_0, \,d(0)=d_0\,,
    \end{cases}
\end{equation}
with $\zeta_0=(h_0, d_0)\in U_0$\,.
System \eqref{accoppiato} cannot be integrated explicitly, yet existence and uniqueness of solutions to the Cauchy problem can be obtained by an application of the standard results from ODE theory.
In the regime $h\approx0$, the asymptotic dynamics in~\eqref{accoppiato} exhibits a fast-slow structure. Indeed, $\gamma$ is negligible with respect to $\varphi$ as $h\to 0$, determining that $d$ evolves on a slower time scale than $h$.

In view of the increasing monotonic behavior and the concavity of the curve of $h\mapsto d(h)$,\footnote{As a consequence of the Implicit Function Theorem, we have that 
$$\frac{\mathrm{d}^2(d(h))}{\mathrm{d}h^2}  =\frac{\mathrm{d}}{\mathrm{d}h} \biggl(\frac{\gamma(h,d(h))}{\varphi(h,d(h))}\biggr)= \frac{d^3}{4(1-d^2)\log(h/(1-d^2))}+o(h),\qquad\text{as $h\to0$,}$$ 
which is, to leading order in $h$, negative.} we can estimate that $d_\infty\in(d_0, d_\mathrm{max})$, where 
\begin{equation*}\label{eq_dmax}
d_{\mathrm{max}}\coloneqq d_0\Bigl(1-\frac{h_0^2d_0^2}{4(1-d_0^2)\log( h_0/(1-d_0^2))}\Bigr)\,,
\end{equation*}
and corresponds to the evolution along the tangent line with slope determined by the initial conditions, namely $\gamma(\zeta_0)/\varphi(\zeta_0)$ (see Figure~\ref{fig:tangente}).
\begin{figure}[h]
    \centering
    \includegraphics[width=.9\linewidth]{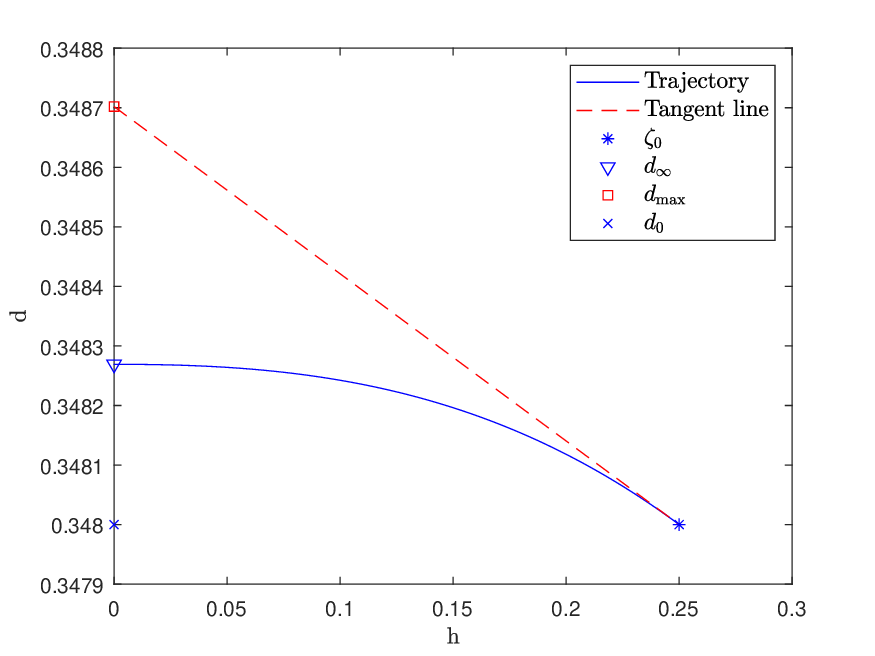}
    \caption{Plot of the trajectory $h\mapsto d(h)$ (blue line), and its tangent line (red dotted line). }
    \label{fig:tangente}
\end{figure}

From the explicit expression of $d_{\mathrm{max}}$\, the region $U_0$ can be determined as the one where the initial condition $\zeta_0$ satisfies: 
\[
\zeta_0\in \mathcal{R}_1\,, 
\quad 
h_0<1-d_{\mathrm{max}}^2\,, 
\quad\text{and}\quad 
d_{\mathrm{max}}<1.
\]

We are now ready to prove the main result of this section.
\begin{prop}[Converging regime]\label{riassunto_regime_convergente}
Under the same assumption of Proposition~\ref{R2_in_R1}, the unique solution to \eqref{sistema distanza-baricentro nuovo} is such that $h\in L^2(\mathbb{R}^+)$.
\end{prop}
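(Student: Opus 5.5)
We must show $\int_0^\infty h(t)^2\,\mathrm{d}t<\infty$ for the solution of \eqref{sistema distanza-baricentro nuovo} with $\zeta_0\in\overline{\mathcal{R}_1\cup\mathcal{R}_2}\setminus\mathcal{A}$. The plan has three steps. First, split the time axis into an initial bounded interval and a tail. Second, use the leading-order behaviour of $f$ near $\mathcal{L}$ to show that $h$ decays at least exponentially on the tail. Third, integrate.

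By Proposition~\ref{R2_in_R1}, $\zeta(t)\to(0,d_\infty)$ for some $d_\infty\in[0,1]$. Hence $h(t)\to0$, and $d(t)$ stays bounded away from $1$ for large $t$. Since $h+2d<2$ in $\mathcal{R}$, we only need $d_\infty<1$. I would obtain this from the fact that $g$ is small near $\mathcal{L}$ (it is $O(h^2)$), together with the bound $d_\infty\le d_{\max}<1$ on $U_0$ discussed before the statement. So there is a time $T>0$ such that $\zeta(t)\in U_0$ for all $t\ge T$, $d(t)\le\bar d<1$, and $h(t)\le\varepsilon$ for a small $\varepsilon$ to be chosen. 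On $[0,T]$ the solution is continuous and $h\le2$, so $\int_0^T h^2\,\mathrm{d}t\le4T<\infty$.

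For $t\ge T$, expand $f$ near $h=0$:
\[
\frac{f(h,d)}{4s^2h}=2\log h-2\log(1-d^2)+1-2d^2+O(h^2).
\]
This tends to $-\infty$ as $h\to0$, uniformly for $d\le\bar d$. Choosing $\varepsilon$ small enough therefore gives a constant $\kappa>0$ with $f(h,d)\le-\kappa h$ for $h\le\varepsilon$ and $d\le\bar d$. In fact $f(h,d)\le 8s^2h\log(h/c)$ for a suitable constant $c$, which is the reduced field $\varphi$ of \eqref{accoppiato}. Gronwall's inequality then yields $h(t)\le h(T)e^{-\kappa(t-T)}$, so $\int_T^\infty h^2\,\mathrm{d}t\le h(T)^2/(2\kappa)<\infty$. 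The decay is actually superexponential: comparing with $\dot h=8s^2h\log(h/c)$ gives $\log(h/c)\sim -Ke^{8s^2t}$. Exponential decay is already enough.

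The main obstacle is the uniform control needed to obtain $T$, namely excluding $d_\infty=1$, that is, approaching the corner $(0,1)$ of $\mathcal{L}$. If $d(t)\to1$, then $\log(1-d^2)\to-\infty$ competes with $\log h$, and the sign estimate on $f$ could fail. I would handle this in one of two ways. The first is the concavity argument behind $d_{\max}$, which gives $d_\infty\le d_{\max}<1$. The alternative is to integrate $\dot d/\dot h=g/f=O(h/|\log h|)$ along the trajectory, which bounds the total increment of $d$ once $h$ is small. Either way, $d$ stays in a compact subset of $[0,1)$, and the decay estimate above applies.
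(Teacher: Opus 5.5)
Your route differs from the paper's. Once in $U_0$, the paper replaces $f$ by the truncated field $\varphi$ of \eqref{accoppiato}. It then sandwiches $h$ between the explicit, doubly-exponentially decaying solutions of $\dot h=\varphi(h,d_0)$ and $\dot h=\varphi(h,d_{\max})$. You instead use a one-sided bound $f\le-\kappa h$ on the true field. That gives only exponential decay, but it is enough.

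\textbf{The gap.} The weak step is the one you flag yourself, $d_\infty<1$, and as stated it fails. Take edge data $\zeta_0=(h_0,1-h_0/2)$ with $h_0\in(0,1)$. These satisfy $f<0<g$, so they lie in $\overline{\mathcal{R}_1}\setminus\mathcal{A}$. There the motion reduces to $\dot\eta=2s^2\eta(1-\eta^2)$ with $\eta_0=1-h_0>0$, so $\eta\to1$ and $\zeta(t)\to(0,1)$, i.e.\ $d_\infty=1$.

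Both of your justifications are also circular near the corner $(0,1)$:
\begin{itemize}
\item The bound $d_\infty\le d_{\max}$ rests on the concavity of $h\mapsto d(h)$, which holds only where $h<1-d^2$. Invoking it presupposes that the trajectory ends up in that region with $d$ bounded away from $1$, which is what you want to prove. On the edge $h/(1-d^2)=2/(1+d)>1$, so edge trajectories never get there.
\item The estimate $g/f=O(h/\abs{\log h})$ is uniform only for $d$ bounded away from $1$. On the edge, $g/f\equiv-1/2$.
\end{itemize}

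\textbf{The repair.} You do not need $d_\infty<1$. The competition between $\log h$ and $\log(1-d^2)$ that worries you is ruled out by the constraint $h\le2(1-d)$. Use $4-4d^2+h^2\ge4(1-d^2)$, then $\frac{4-4d^2-h^2}{4-4d^2+h^2}\le1$, and finally $h/(1-d^2)\le2/(1+d)$. This gives
\[
\frac{f(h,d)}{4s^2h}\le2\log\frac{h}{1-d^2}+1-2d^2\le\psi(d)\coloneqq2\log\frac{2}{1+d}+1-2d^2 .
\]
The function $\psi$ is decreasing with $\psi(1)=-1$, so you can fix $d_2<1$ with $\psi(d_2)<0$. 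Then:
\begin{itemize}
\item for $d\ge d_2$, you get $f\le4s^2\psi(d_2)h$ for every admissible $h$;
\item for $d\le d_2$, the middle expression gives $f\le-4s^2h$ as soon as $h\le\varepsilon\coloneqq(1-d_2^2)/e$.
\end{itemize}
Hence $f\le-\kappa h$, with $\kappa\coloneqq4s^2\min\{1,-\psi(d_2)\}$, on the whole strip $\{h\le\varepsilon\}\cap\overline{\mathcal{R}}$, uniformly up to the corner.

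By Proposition~\ref{R2_in_R1}, $h(t)\to0$, so some $t_0$ has $h(t_0)\le\varepsilon$. Afterwards $h$ decreases, stays in the strip, and satisfies $h(t)\le h(t_0)e^{-\kappa(t-t_0)}$. So $h\in L^2(\mathbb{R}^+)$ with no information on $d_\infty$, and the rest of your argument goes through. The paper's proof also depends on reaching $U_0$, so it has the same blind spot for edge data; your argument, patched this way, covers them as well.
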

\begin{proof}
As a consequence of Proposition~\ref{R2_in_R1}, for every $\zeta_0\in \mathcal{R}_1\cup\mathcal{S}\cup \mathcal{R}_2$\, there exists a time $t^* \geq 0$ such that  $\zeta(t^*) \in U_0$\,, where we can use~ \eqref{accoppiato}, up to restarting the dynamics and setting $t^*=0$. In particular, due to the monotonicity of the dynamics (in $U_0$\,, $h$ is strictly decreasing and $d$ is strictly increasing), and that of $\varphi$ with respect to~$d$, we deduce that 
\[
\varphi(h,d_0)\le \varphi(h,d)\le \varphi(h,d_\mathrm{max}).
\]
By comparison, it follows that 
\(h_\ell(t)\le h(t)\le h_\mathrm{u}(t),\) for every $t>0$,
where~$h_\ell$ and~$h_{\mathrm{u}}$ are the solutions to 
\[
\begin{cases}
    \dot h = \varphi(h,d_0),\\
    h(0)=h_0
\end{cases}
\qquad\text{and}\qquad
\begin{cases}
    \dot h = \varphi(h,d_{\mathrm{max}}),\\
    h(0)=h_0
\end{cases}
\]
respectively, whose explicit solutions are
\[
\begin{split}
h_\ell(t)=& (1-d_0^2)\exp\biggl(\log\frac{h_0}{1-d_0^2}\cdot \exp(8s^2t)\biggr), \\
h_{\mathrm{u}}(t)=& (1-d_{\textrm{max}}^2)\exp\biggl(\log\frac{h_0}{1-d_{\textrm{max}}^2}\cdot \exp(8s^2t)\biggr).
\end{split}
\]
Since both $\log(h_0/(1-d_0^2))$ and $\log(h_0/(1-d_{\mathrm{max}}^2))$ are negative in $U_0$\,, it follows that $h_\ell\,, h_{\mathrm{u}}\in L^2(\mathbb{R}^+)$, and the thesis follows by comparison.
\end{proof}

\section{Macroscopic time rescaling and effective dynamics}\label{sec_rescaling}

Motivated by the equivalence between an edge dislocation and a collapsing disclination dipole, first identified at the kinematic level by Eshelby \cite{Eshelby66} and subsequently established at the energetic level in \cite{CDLM24}, we show that the dynamics of the dipole center is governed by the same evolution law as that of an edge dislocation. This extends the equivalence to the dynamical setting.

This  is obtained by a suitable time rescaling, that is necessary because of the separation of time scales in~\eqref{accoppiato}.

By reformulating the dynamics according to the time rescaling \eqref{def tau}, the collision of the dipole now occurs in finite time. This behavior is analogous, at least heuristically, to that observed for dislocations. 
In Remark~\ref{remark_below} we comment that  this analogy is in fact exact. 
We point out that the $L^2$ integrability of~$h$ plays a crucial role here.

\begin{theorem}[Effective dislocation dynamics]\label{time rescaling}
Let $\zeta_0 \in U_0$ and let $t\mapsto h(t)$ be the associated solution to the Cauchy problem~\eqref{accoppiato}. 
Let $\tau \colon \mathbb{R}^+\to \mathbb{R}^+$ be (the macroscopic time) defined as
\begin{equation}\label{def tau}
\tau(t)\coloneqq \frac12\int_0^t h^2(s)\,\mathrm{d}s.
\end{equation}
Then the function $\tau$ is invertible and the rescaled dynamics of $d$ as a function of $\tau$, namely $\delta(\tau)\coloneqq d(\sigma(\tau))$ ($\sigma$ being the inverse function of $\tau$) satisfies
\begin{equation}\label{soluzione riscalata}
\displaystyle
    \log\Bigl(\frac{\delta^2}{{\delta_0}^2}\Bigr) + \frac{1}{\delta^2} - \frac{1}{\delta_0^2} + 8s^2\tau = 0.
\end{equation}   
Moreover, $\delta$ collides with the boundary of the domain in finite time.
\end{theorem}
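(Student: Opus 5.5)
The plan is to change variables in the second equation of~\eqref{accoppiato}, integrate the resulting separable ODE explicitly, and then read off the finite collision time from the closed-form relation~\eqref{soluzione riscalata}.

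\emph{Invertibility of $\tau$.} Since $\zeta_0\in U_0$, the solution of~\eqref{accoppiato} satisfies $h(t)>0$ for every $t\ge0$, because $\mathcal{L}$ is reached only in infinite time. Hence $\tau'(t)=\tfrac12h^2(t)>0$, so $\tau$ is $C^1$ and strictly increasing on $\mathbb{R}^+$. It is therefore a $C^1$ bijection onto $[0,\tau_\infty)$, where $\tau_\infty\coloneqq\tfrac12\|h\|_{L^2(\mathbb{R}^+)}^2$. Proposition~\ref{riassunto_regime_convergente} gives $\tau_\infty<+\infty$; the comparison argument there is carried out precisely for~\eqref{accoppiato} in $U_0$. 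The inverse $\sigma$ is $C^1$ with $\sigma'(\tau)=2/h^2(\sigma(\tau))$.

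\emph{Rescaled equation and its integration.} By the chain rule and the second equation in~\eqref{accoppiato},
\[
\frac{\mathrm{d}\delta}{\mathrm{d}\tau}=\dot d(\sigma(\tau))\,\sigma'(\tau)=\frac{2s^2h^2\delta^3}{1-\delta^2}\cdot\frac{2}{h^2}=\frac{4s^2\delta^3}{1-\delta^2},\qquad \delta(0)=\delta_0\coloneqq d_0.
\]
The factor $h^2$ cancels, which is exactly why the macroscopic time~\eqref{def tau} is chosen. The result is an autonomous scalar equation that no longer involves $h$. Since $\delta>0$, I separate variables:
\[
\int_{\delta_0}^{\delta}\bigl(\rho^{-3}-\rho^{-1}\bigr)\,\mathrm{d}\rho=4s^2\tau,
\quad\text{that is}\quad
-\frac{1}{2\delta^2}+\frac{1}{2\delta_0^2}-\log\frac{\delta}{\delta_0}=4s^2\tau.
\]
Multiplying by $-2$ gives~\eqref{soluzione riscalata}.

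\emph{Finite-time collision.} The right-hand side $4s^2\delta^3/(1-\delta^2)$ is positive on $(0,1)$, so $\delta$ is strictly increasing. Write $F(\delta)\coloneqq\log(\delta^2/\delta_0^2)+\delta^{-2}-\delta_0^{-2}$; then~\eqref{soluzione riscalata} reads $F(\delta)=-8s^2\tau$. The function $F$ is decreasing on $(0,1)$ and continuous up to $\delta=1$. Hence the trajectory reaches $\delta=1$, which corresponds to $\partial B_1$, at the finite time
\[
\tau^*=\frac{1}{8s^2}\Bigl(\frac{1}{\delta_0^2}-1-\log\frac{1}{\delta_0^2}\Bigr).
\]
This quantity is positive because $x-1-\log x>0$ for $x=\delta_0^{-2}>1$. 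Moreover $(1-\delta)^2\sim c\,(\tau^*-\tau)$ near $\tau^*$, so the velocity $\mathrm{d}\delta/\mathrm{d}\tau$ blows up at $\tau^*$, in agreement with the behaviour of an edge dislocation attracted to the boundary (cf.\ Remark~\ref{remark_below}).

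\emph{Main obstacle.} The delicate step is interpreting this last claim consistently with the range of $\tau$. The relation~\eqref{soluzione riscalata} holds only for $\tau\in[0,\tau_\infty)$, and within $U_0$ one has $d<d_{\max}<1$. So I expect that $\tau^*$ need not lie in $[0,\tau_\infty)$, and the collision statement has to be understood for the effective limit equation $\mathrm{d}\delta/\mathrm{d}\tau=4s^2\delta^3/(1-\delta^2)$, extended beyond $\tau_\infty$. The first thing I would do is make this explicit: $\delta$ is the unique maximal solution of this autonomous ODE, it coincides with $d\circ\sigma$ on $[0,\tau_\infty)$, and it hits $\delta=1$ at $\tau^*$. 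I would also note that $\tau_\infty<\infty$, i.e.\ the $L^2$ integrability of $h$, is what makes the whole infinite microscopic time interval correspond to a bounded macroscopic interval.
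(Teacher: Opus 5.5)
Your proposal is correct and follows the paper's proof essentially step by step. Strict monotonicity of $\tau$ together with $h\in L^2(\mathbb{R}^+)$ from Proposition~\ref{riassunto_regime_convergente} gives invertibility onto a bounded interval, and the chain rule cancels $h^2$ to produce \eqref{dinamica_riscalata}. Separation of variables then yields \eqref{soluzione riscalata}, and your collision time coincides with the paper's $\bar\tau$.

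Your caveat about the range of $\tau$ is well taken, since the paper passes over it, and it can be sharpened: because $d$ stays below $d_{\max}<1$, one always has $\bar\tau>\tau_\infty$. So the finite-time collision is a statement about the maximal solution of the effective equation \eqref{dinamica_riscalata} continued past $\tau_\infty$, exactly as you propose to make explicit.
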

\begin{proof}
The function $\tau$ defined in \eqref{def tau} has the following properties: $\tau(0)=0$, it is strictly increasing, and
\[\tau_\infty\coloneqq\lim_{t\to+\infty} \tau(t)=\frac12 \int_0^{+\infty} h^2(s)\,\mathrm{d}s \in\mathbb{R}^+,
\]
owing to the fact that $h\in L^2(\mathbb{R}^+)$ (see Proposition~\ref{riassunto_regime_convergente}).

The inverse function $\sigma$ is therefore defined from the interval $[0,\tau_\infty]$ to $\mathbb{R}^+$, and allows us to define the rescaled dynamics of the centre of the dipole via $\delta(\tau)= d(\sigma(\tau))$.
By the chain rule, the dynamics for $d$ in \eqref{accoppiato} reads now
\begin{equation}\label{dinamica_riscalata}
    \begin{cases}
    \displaystyle \delta'(\tau) = 4s^2\frac{\delta^3(\tau)}{1 - \delta^2(\tau)}  & \text{for $\tau\in(0,\tau_\infty]$,}\\
    \delta(0)=\delta_0\coloneqq d_0\,.
    \end{cases}
\end{equation}
By separation of variables, the solution to \eqref{dinamica_riscalata} is given by \eqref{soluzione riscalata}. 
Moreover, since $\delta'(\tau)>0$ for all $\tau$, the motion occurs towards the boundary and the time needed to reach it ($\delta =1$) is
\[\displaystyle
\bar{\tau} = \frac{1}{8s^2}\bigg(\log{\delta_0}^2-1+\frac{1}{{\delta_0}^2}\bigg)\,.\qedhere
\]
\end{proof}

\begin{remark}\label{remark_below}
    The time rescaling introduced by~\eqref{def tau} allows us to appreciate the dynamics of the $d$ variable, which is much slower than that of the dipole length~$h$ in~\eqref{accoppiato}.
    In particular, owing to the introduction of the macroscopic time $\tau$ and by recasting the dynamics of~$d$ in \eqref{accoppiato} as in \eqref{dinamica_riscalata}, we see that the equation for~$\delta$ coincides with that of a collapsing dipole of disclinations found in \cite[Section~3.2]{CMS}.
    
There, the dipole length $h$ is treated as a small parameter that converges to zero.
Here, on the contrary, the function $t\mapsto h(t)$ is a solution to the dynamics. In the converging disclinations regime treated in Section~\ref{section converging}, it vanishes. This suggests the question whether these two limiting cases may lead to the same mechanical behaviour.
Theorem~\ref{time rescaling} proves that the two resulting dynamics (that is \cite[Section~3.2]{CMS} and that in \eqref{dinamica_riscalata}) are the same. 

This may be thought of as a dynamical version of Eshelby equivalence: a moving colliding dipole of disclinations generates a moving edge dislocation. 
Moreover, if this dislocation is not placed at the center of the disk (as is our case), then it is attracted to the boundary and collides on it in finite time (see also \cite[Section~4.2]{HudsonMorandotti2017} for the analogous case of a screw dislocation). 
\end{remark}

\section{Numerical solutions and phase diagram}\label{phase diagram}
In this section, we describe some numerical experiments that we ran to clarify the dynamics in~$\mathcal{R}$.
Let us consider the time interval $[0, T]$, for some $T>0 $. Given $N\in \mathbb{N}\setminus \{0\}$ and $n\in \{0,\dots,N\}$, we define 
\[\Delta t\coloneqq \frac{T}{N}, \qquad \text{and}\qquad t_n\coloneqq n\Delta t =\frac{n}{N} T\]
and then introduce 
\[
h^n\coloneqq h(t_n) \qquad \text{and} \qquad d^n\coloneqq d(t_n), 
\]
with $ h^0=h_0$ and $d^0=d_0$\,. 
Therefore, the time-discrete version of system \eqref{sistema distanza-baricentro nuovo} in terms of $\zeta^n = (h^n,d^n)$ reads
\[\zeta ^n = \zeta^ {n-1}+\Delta t \, \Phi(\zeta^{n-1}), \qquad \text{with $\zeta^0=(h_0,d_0)$,}\]
which can be solved via the explicit Euler scheme.
We implemented our own formulation of the explicit Euler method and run it in the MATLAB R2026b environment. 

We consider different initial configurations, in order to capture the emergence of the possible behaviors of a pair of disclinations arranged radially. 
The results of the simulations are shown in Figure~\ref{fig:analisi qualitativa numerica} and are divided into diverging (on the top) and converging (on the bottom) regimes.

\begin{figure}
    \centering
    \begin{subfigure}[b]{1\textwidth}
        \centering
        \includegraphics[width=\textwidth]{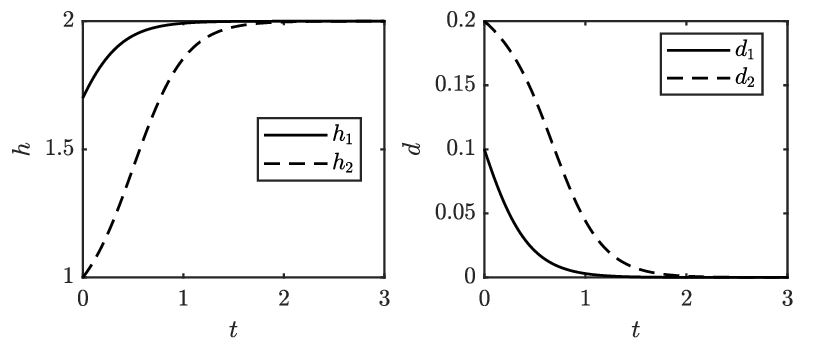}
        \caption{Diverging regime}
        \label{fig:divergente}
    \end{subfigure}
    \hfill
    \begin{subfigure}[b]{1\textwidth}
        \centering
        \includegraphics[width=\textwidth]{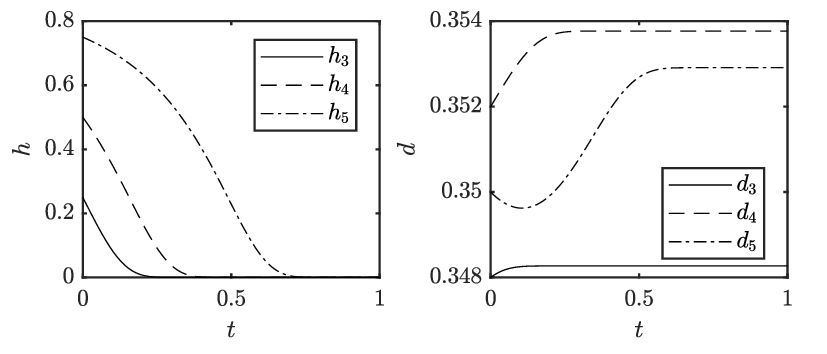}
        \caption{Converging regime}
        \label{fig:congergente}
    \end{subfigure}
    \caption{Possible scenarios of the dynamics of the pair of disclinations. On the top, the diverging regime characterized by $h\to 2$ and $d\to 0$. On the bottom, the converging regime characterized by $h\to 0$ and $d\to d_\infty$\,.}
    \label{fig:analisi qualitativa numerica}
\end{figure}

As pointed out in \cite[Section~3.1]{CGMP2025}, the dynamics of disclinations is the result of the interplay of three factors: (i) the presence of the boundary, (ii) the disclination-boundary interaction, and (iii) the pairwise interactions between disclinations. 
The identification of these three different contributions provides a useful tool for understanding the numerical simulations.
In particular, the diverging regime simulations reveal two qualitatively different scenarios. 
Referring to the top row of Figure~\ref{fig:analisi qualitativa numerica}, in the $(h_1, d_1)$ scenario, the monotonicity and the concavity properties of the trajectories of~$h$ and~$d$ suggest that the boundary effects dominate the evolution.
On the other hand, in the ($h_2,d_2$) scenario, the change in concavity observed in both trajectories indicates an initial competition between the mutual disclination interaction and the boundary contribution. 

By contrast, the converging regime is characterized by three qualitatively different scenarios. 
In the ($h_3,d_3$) scenario, the monotonicity and the concavity properties of the trajectories of $h$ and $d$ indicate that the mutual disclination interaction is dominant; while, in the ($h_4,d_4$) scenario, the monotonicity and the change in concavity observed in both trajectories suggest an initial competition between the mutual disclination interaction and the boundary contribution. 
Finally, in the ($h_5, d_5$) scenario, the behavior of the trajectories (in particular that of $d$) suggests an initial balance between the mutual disclination interaction and the boundary contribution, which in the end evolves as in the ($h_3,d_3$) scenario.

\smallskip

We conclude this section by constructing a map that associates to each initial point $\zeta_0=(h_0,d_0)\in \overline{\mathcal{R}}\setminus\mathcal{L}$ the final point $\zeta_{\text{end}}=(h_{\text{end}},d_{\text{end}})\in\overline{\mathcal{R}}\setminus\mathcal{L}$ determined by numerically solving \eqref{sistema distanza-baricentro nuovo} up to a time $\overline{T}>0$.
The phase diagram in Figure~\ref{fig:comportamento difetti} (top picture) is constructed by choosing $\overline{T}=20$ and $N=10^6$.

\begin{figure}
	\begin{minipage}{.9\textwidth}
    \centering
    \includegraphics[width=1\textwidth]{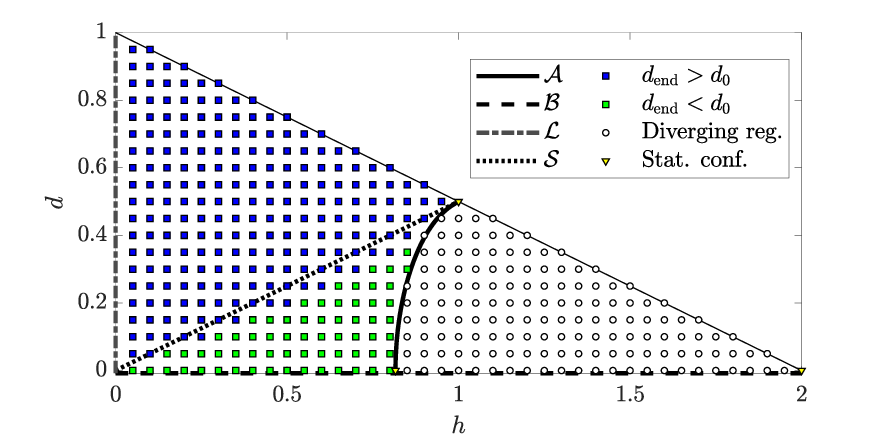}
    \end{minipage}
	\begin{minipage}{.9\textwidth}
     \includegraphics[width=1\textwidth]{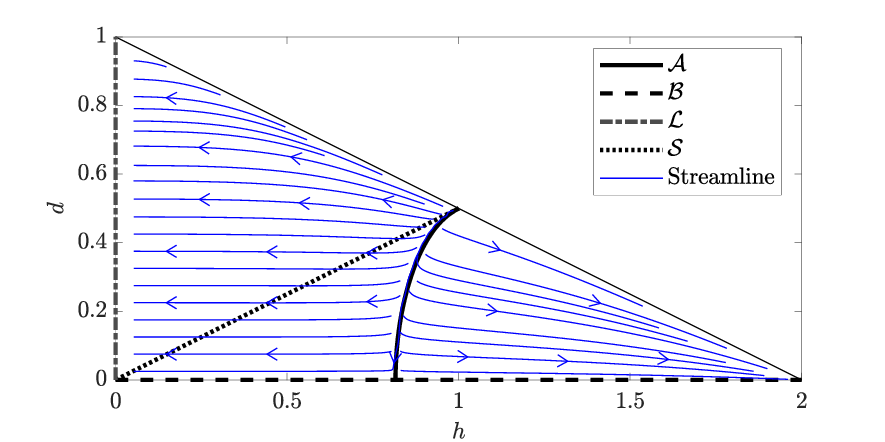}
     \end{minipage}
\caption{On the top, the phase diagram of the disclination dynamics.  
Here squares correspond to initial configurations in the colliding dipole regime while circles indicate initial configurations in the diverging disclination regimes. 
On the bottom, the flux lines plotted with the \textit{streamslice} native Matlab function.}\label{fig:comportamento difetti}
\end{figure}

We classify each initial point $\zeta_0$ in the domain in terms of its final state $\zeta_{\text{end}}$.
  
Investigation of the numerical solutions confirm the emergence of the three main regimes analyzed: converging regime, diverging regime, and stationary regime. In particular, 
\begin{itemize}
    \item The filled square represents the converging behavior: the defects are converging to $(0, d_{\mathrm{end}})\in\mathcal{L}$ for some point $d_{\mathrm{end}}\in [0, 1]$. 
    Blue squares indicate that the final position of the midpoint of the dipole has increased compared to the initial position ($d_\mathrm{end}>d_0$), while green squares show that the midpoint of the dipole has  decreased ($d_\mathrm{end}<d_0$).
    \item The circle represents the diverging behavior: the two defects are far enough not to experience the presence of the other defect and therefore, each one behaves as an isolated disclination and tends to reach the boundary of the domain ($h\to 2, \,d\to 0$).
    \item The triangle represents the stationary points. In particular, we note that, for the sake of completeness, we manually added the initial configuration sample the stationary configuration $E_1$.
\end{itemize}
Finally, we complete the phase diagram by using the Matlab native function \textit{streamslice} to draw the flux lines of the problem.
Figure \ref{fig:comportamento difetti} (bottom picture) clearly shows that the arc $\mathcal{A}$ is the border between the converging and the diverging regime.

\bigskip

\paragraph{\textbf{Acknowledgements}}
JASSO scholarship
offered within the “Kyushu University Program for Emerging Leaders in Science” (Q-PELS) and local support received at the Institute of Mathematics for Industry, an international Joint Usage and
Research facility located at Kyushu University (NB).
PC’s work is supported by JSPS KAKENHI Grant-in-Aid for Scientific Research (C) JP24K06797. 
%and  by JST A--STEP (Grant Number JPMJTR24T6). 
MM acknowledges partial support from the MUR grant Geometric Analytic Methods for PDEs and Applications (2022SLTHCE cup E53D23005880006). This manuscript reflects only the authors’ views and opinions and the Italian Ministry cannot be considered responsible for them.
\noindent PC holds an honorary appointment at La Trobe University. 
NB, PC, and MM are members of the Gruppo Nazionale per l’Analisi Matematica, la Probabilità e le loro Applicazioni (GNAMPA) of the Istituto Nazionale di Alta Matematica (INdAM). 
MM and NB thank the Institute of Mathematics for Industry, an International Joint Usage and Research Center located in Kyushu University; PC  thanks the Department of Mathematical Sciences ``G.~L.~Lagrange'' of Politecnico di Torino where part of the work contained in this paper was carried out.

\bibliographystyle{abbrv}
\bibliography{refsPatrick}
\end{document}